\newtheorem{theorem}{Theorem}[section]
\newtheorem{lemma}[theorem]{Lemma}
\newtheorem{definition}[theorem]{Definition}
\newtheorem{remark}[theorem]{Remark}
\numberwithin{equation}{section}  
  \newcounter{mnote}
  \let\oldmarginpar\marginpar
    \renewcommand\marginpar[1]{\-\oldmarginpar[\raggedleft\footnotesize #1]%
    {\raggedright\footnotesize #1}}
\definecolor{myblue}{rgb}{0.2,0.2,0.7}
\definecolor{mygreen}{rgb}{0,0.6,0}
\definecolor{mycyan}{rgb}{0,0.6,0.6}
\definecolor{myred}{rgb}{0.9,0.2,0.2}
\definecolor{mymagenta}{rgb}{0.9,0.2,0.9}
\definecolor{mywhite}{rgb}{1.0,1.0,1.0}
\definecolor{myblack}{rgb}{0.0,0.0,0.0}
\newcommand{\mcal}{\mathcal}
\newcommand{\beq}{\begin{equation}}
\newcommand{\eeq}{\end{equation}}
\newcommand{\beqa}{\begin{eqnarray}}
\newcommand{\eeqa}{\end{eqnarray}}
\renewcommand{\div}{{\operatorname{div}}}
\renewcommand{\mcal}{\mathcal}
\newcommand{\bs}{\boldsymbol}
\renewcommand{\div}{\operatorname{div}}
\newcommand{\curl}{\operatorname{curl}}
\newcommand{\grad}{\operatorname{grad}}
\begin{document}

\title[Convergence and Optimality of Adaptive Mixed FE Methods]
      {Convergence and Optimality of \\ Adaptive Mixed Finite Element Methods}

\author{Long Chen}
\email{chenlong@math.uci.edu}
\address{Department of Mathematics, University of California at Irvine, Irvine, CA 92697}

\author{Michael Holst}
\email{mholst@math.ucsd.edu}
\address{Department of Mathematics, University of California at San Diego, La Jolla, CA 92093} 

\author{Jinchao Xu}
\email{xu@math.psu.edu}
\address{The School of Mathematical Science, Peking University, and Department of Mathematics, Pennsylvania State University, University Park, PA 16801} 

\thanks{The first two authors were supported in part by NSF Awards 0411723 and 022560, in part by DOE Awards DE-FG02-04ER25620 and DE-FG02-05ER25707, and in part by NIH Award P41RR08605.}
\thanks{The third author was supported in part by NSF DMS-0619587, DMS-0609727, NSFC-10528102 and Alexander Humboldt foundation.}

\date{April 3, 2006}


\begin{abstract}
  The convergence and optimality of adaptive mixed finite element
  methods for the Poisson equation are established in this paper. The
  main difficulty for mixed finite element methods is the lack of
  minimization principle and thus the failure of orthogonality. A
  quasi-orthogonality property is proved using the fact that the error
  is orthogonal to the divergence free subspace, while the part of the
  error that is not divergence free can be bounded by the data oscillation
  using a discrete stability result. This discrete stability result
  is also used to get a localized discrete upper bound which is
  crucial for the proof of the optimality of the adaptive
  approximation.
\end{abstract}

\maketitle


{\footnotesize
\tableofcontents
}

\section{introduction}

Adaptive methods are now widely used in scientific computation to
achieve better accuracy with minimum degrees of freedom. While these
methods have been shown to be very successful, the theory ensuring the
convergence of the algorithm and the advantages over non-adaptive
methods is still under development. Recently, several results have
been obtained for standard finite element methods for elliptic partial
differential equations
\cite{Babuska.I;Vogelius.M1984,Dorfler.W1996,Morin.P;Nochetto.R;Siebert.K2000,Morin.P;Nochetto.R;Siebert.K2003,Binev.P;Dahmen.W;DeVore.R2004,Stevenson.R2007,Morin.P;Siebert.K;Veeser.A2007a,Cascon.J;Kreuzer.C;Nochetto.R;Siebert.K2007,Chen.L;Xu.J2007}.

In this paper, we shall establish the convergence and optimality of
adaptive mixed finite element methods (AMFEMs) of the model problem
\begin{equation}\label{eq:Poisson}
-\Delta u=f \quad \hbox{in } \Omega, \quad \hbox{ and }\quad
 u=0 \quad \hbox{on } \partial \Omega,
\end{equation}
posed on a polygonal and simply connected domain $\Omega \subset
\mathbb R^2$. In many applications (\cite{Brezzi.F;Fortin.M1991}) the
variable $\boldsymbol \sigma =-\nabla u$ is of interest 
and it is therefore convenient to use mixed finite element methods,
such as the Raviart-Thomas mixed method ~\cite{Raviart.P;Thomas.J1977}
and Brezzi-Douglas-Marini mixed
method~\cite{Brezzi.F;Douglas.J;Marini.L1985}. We shall construct
adaptive mixed finite element methods based on the local refinement of
triangulations and prove they will produce a sequence of approximation
of $\bs \sigma$ in an optimal way.

Our main result is the following optimal convergence of our algorithms
{\bf \small AMFEM} and its variant. Let $\boldsymbol \sigma _N$ be the
approximation of $\boldsymbol \sigma$ based on the triangulation
$\mathcal T_N$ obtained in {\bf \small AMFEM}. If $\boldsymbol \sigma
\in \mathcal A^s$ and $f\in \mathcal A^s_o$, then
\begin{equation}\label{finalopt}
\|\boldsymbol \sigma -\boldsymbol \sigma _N\|\leq C(\|\boldsymbol \sigma\|_{\mathcal A^s}+\|f\|_{\mathcal A^s_o})(\# \mathcal T_N-\#\mathcal T_0)^{-s},
\end{equation}
where $(\mathcal A^s, \|\cdot\|_{\mathcal A^s})$ and $(\mathcal
A^s_o,\|\cdot\|_{\mathcal A^s_o})$ are approximation spaces as in
\cite{Binev.P;Dahmen.W;DeVore.R2004}. The index $s$ is used to
characterize the best possible approximation rate of $\boldsymbol
\sigma$, which depends on the regularity of the solution and data, and
the order of elements. For example when $f\in L^2(\Omega)$ and
$\boldsymbol \sigma \in \boldsymbol W^{1,1}(\Omega)$, we can achieve the
optimal convergence rate $s=1/2$ for the lowest order Raviart-Thomas
finite element space. We refer
to~\cite{Binev.P;Dahmen.W;DeVore.R;Petrushev.P2002} for the
characterization of $\mathcal A^s$ in terms of Besov spaces and to
\cite{Bacuta.C;Bramble.J;Xu.J2002,Bacuta.C;Bramble.J;Xu.J2003,Dahlke.S;DeVore.R1997,Dahlke.S1999}
for the regularity results in Besov norms. We comment that to apply our adaptive
algorithm, we do not need to know $s$ explicitly. Our algorithm will
produce the best possible approximation rate for the unknown $\bs
\sigma$.

For the analysis of the convergence of adaptive procedure, we follow
the new approach by Cascon, Kreuzer, Nochetto and Siebert
\cite{Cascon.J;Kreuzer.C;Nochetto.R;Siebert.K2007}, and for the
optimality we mainly use the simplified case in Stevenson's
work~\cite{Stevenson.R2007}. A distinguish feature of the new
approach for the convergence proof is the relaxation of the interior
node requirement for the refinement. We do not claim any originality
on the proof of convergence and optimality. Instead the main
contribution of this paper is to establish two important ingredients
used in the proof, namely quasi-orthogonality and discrete upper
bound.

One main ingredient in the convergence analysis of standard AFEM is
that the error is orthogonal to the finite element spaces in
energy-related inner product since the standard finite element
approximation can be characterized as a minimizer of Dirichlet-type
energy. For mixed finite element methods, however, the approximation
is a saddle point of the corresponding energy and thus there is no
orthogonality available. We shall prove a quasi-orthogonality
result. A similar result for the lowest order Raviart-Thomas finite
element space has recently been proved by Carstensen and Hoppe
\cite{Carstensen.C;Hoppe.R2006}, where a special relation between
mixed finite element method and non-conforming method is used. In this
paper, we shall propose a new and more straight-forward approach which
works for any order elements and both Raviart-Thomas and
Brezzi-Douglas-Marini methods. The main observation is that the error
is orthogonal to the divergence free subspace, while the part of the
error containing divergence can be bounded by the data oscillation
using a discrete stability result.

Another ingredient to establish the optimality of the adaptive
algorithm is the localized discrete upper bound for \textit{a
  posteriori} error estimator. Using the discrete stability result, we
are able to obtain such discrete upper bound and use it to prove the
optimality of the convergent algorithm. The optimality of mixed
adaptive finite element methods seems to be new.

The rest of this paper is organized as follows. In Section 2, we shall
introduce mixed finite element methods and give a short review of mesh
adaptivity through local refinement. We shall include many preliminary
results in this section for later usage. In Section 3, we shall prove
the discrete stability result and use it to prove the
quasi-orthogonality result. In Section 4, we shall present \textit{a
  posteriori} error estimator and prove the discrete upper bound. In
Section 5, we shall present our algorithms and prove their convergence
and optimality.

Throughout this paper, we shall use standard notation for Sobolev
spaces and use boldface letter for the spaces of vectors. The letter
$C$, without subscript, denotes generic constants that may not be the
same at different occurrences and $C_i$, with subscript, denotes
specific important constants.

\section{Preliminaries}
In this section we shall introduce mixed finite element methods for
the Poisson equation and discuss the general
procedure of adaptive methods through local refinement. We shall also
include a result on the approximation of the data.

\subsection{Mixed finite element methods}
The standard finite element method involves writing (\ref{eq:Poisson})
as a primal variational formulation: for a given $f\in L^2(\Omega)$,
find $u\in H_0^1(\Omega)$ such that
\begin{equation}\label{primary}
  \int _{\Omega}\nabla u \cdot \nabla v= \int _{\Omega}f v \qquad \forall v\in H_0^1(\Omega),
\end{equation}
and then finding an approximation by solving (\ref{primary}) in
finite-dimensional subspaces of $H_0^1(\Omega)$. In many applications
(\cite{Brezzi.F;Fortin.M1991}) the variable $\boldsymbol \sigma
=-\nabla u$ is of interest, and it is therefore convenient to use
mixed finite element methods. Let us first write (\ref{eq:Poisson}) as
a first order system:
\begin{equation}\label{eq:MixPoisson}
\boldsymbol \sigma + \nabla u=0,\;
{\rm div\,} \boldsymbol \sigma =f\; \hbox{ in } \Omega,
\quad \hbox{ and }\quad  
u=0\; \hbox{ on } \partial \Omega.
\end{equation}
Let 
$$
\boldsymbol \Sigma =\boldsymbol H({\rm div\,};\Omega):=\{\boldsymbol
\tau \in \boldsymbol L^2(\Omega): {\rm div\,} \boldsymbol \tau \in
L^2(\Omega)\}, \; \hbox{ and }\; U=L^2(\Omega).
$$
We shall use $\|\cdot\|$ to denote $L^2$-norm and $\|\cdot
\|_{H(\div)}$ for the $\bs H(\div)$ norm:
$$
\|\bs \tau\|_{H(\div)} = (\|\bs \tau\|^2+\|\div \bs \tau\|^2)^{1/2},
\quad \forall \tau \in \bs \Sigma.
$$

The mixed (or dual) variational formulation of (\ref{eq:MixPoisson})
is, given an $f\in L^2(\Omega)$, find $(\boldsymbol \sigma,u) \in
\boldsymbol \Sigma \times U$ such that
\begin{align}
  (\boldsymbol \sigma, \boldsymbol \tau)-({\rm div\,}\boldsymbol \tau, u)&=0  & \forall \boldsymbol \tau \in \boldsymbol \Sigma, \label{eq:mixweak1}\\
  ({\rm div\,}\boldsymbol \sigma, v)&=(f,v) &\forall v\in
  U, \label{eq:mixweak2}
\end{align}
where $(\cdot,\cdot)$ is the inner product for $L^2(\Omega)$ or
$\boldsymbol L^2(\Omega)$. Note that the Dirichlet boundary condition
is imposed as a natural boundary condition in the dual formulation
(\ref{eq:mixweak1}) using integration by parts. The existence and
uniqueness of the solution $(\boldsymbol \sigma, u)$ to
(\ref{eq:mixweak1})-(\ref{eq:mixweak2}) follows from the so-called
inf-sup condition which can be easily established for this model
problem \cite{Brezzi.F;Fortin.M1991}.

Given a shape regular and conforming (in the sense of
\cite{Ciarlet.P1978}) triangulation $\mathcal T_H$ of $\Omega$, the
mixed finite element method is to solve
(\ref{eq:mixweak1})-(\ref{eq:mixweak2}) in a pair of
finite-dimensional spaces $\boldsymbol \Sigma _H\subset \boldsymbol
\Sigma$ and $U_H\subset U$. That is, given an $f\in L^2(\Omega)$, to
find $(\boldsymbol \sigma _H,u_H)\in \boldsymbol \Sigma _H\times U_H$
such that
\begin{align}
  (\boldsymbol \sigma _H, \boldsymbol \tau _H)-({\rm div\,}\boldsymbol \tau _H, u _H)&= 0 & \forall \, \boldsymbol \tau_H \in \boldsymbol \Sigma _H \label{eq:mixfem1}\\
  ({\rm div\,}\boldsymbol \sigma _H, v_H)&= (f_H,v_H) & \forall
  \, v_H\in U_H. \label{eq:mixfem2}
\end{align}
Hereafter $f_H$ denotes the $L^2(\Omega)$ projection of $f$ onto
$U_H$. Namely, $f_H\in U_H$ such that $(f_H, v_H)=(f, v_H), \; \forall
v_H\in U_H$.  The well-posedness of the discrete problem
(\ref{eq:mixfem1})-(\ref{eq:mixfem2}), unlike the standard finite
element method for the primary variational formulation, is
non-trivial. One sufficient condition to construct stable finite
element spaces is to ensure the inf-sup condition still holds for the
discrete problem. Since 1970's many stable finite element spaces have
been introduced for this case, such as those of Raviart-Thomas spaces
\cite{Raviart.P;Thomas.J1977} and Brezzi-Douglas-Marini spaces
\cite{Brezzi.F;Douglas.J;Marini.L1985}. Recently it has been shown
that such stable finite element spaces can be constructed in an
elegant way using differential complex theory
\cite{Bossavit.A1988,Hiptmair.R1999a,Arnold.D2004,Arnold.D;Falk.R;Winther.R2005}.

The Raviart-Thomas spaces \cite{Raviart.P;Thomas.J1977} are defined
for an integer $p\geq 0$ by
\begin{align*}
  RT_H &= \bs \Sigma _H^p \times U_H^p, \; \text{where}\\
  \boldsymbol \Sigma _H^p(\mathcal T_H)&:=\{\boldsymbol \tau \in
  H({\rm div\,}; \Omega)\, :\, \boldsymbol \tau |_{T}\in {\boldsymbol
    P}_p(T)+\boldsymbol x\mathcal P_p(T), \; \forall \, T\in \mathcal
  T_H\},\\
  \text{and}\quad U_H^p(\mathcal T_H)&:=\{v\in L^2(\Omega)\,:\,
  v|_{T}\in \mathcal P_p(T),\; \forall \, T\in \mathcal T_H\},
\end{align*}
and where $\mathcal P_p(T)$ denotes the space of polynomials on $T$ of
degree at most $p$. 

The Brezzi-Douglas-Marini
spaces~\cite{Brezzi.F;Douglas.J;Marini.L1985} are defined for an
integer $p \geq 1$ by
\begin{align*}
  BDM_H &= \bs \Sigma _H^p \times U_H^p, \; \text{where}\\
  \boldsymbol \Sigma _H^p(\mathcal T_H)&:=\{\boldsymbol \tau \in
  H({\rm div\,}; \Omega)\, :\, \boldsymbol \tau |_{T}\in {\boldsymbol
    P}_p(T), \; \forall \, T\in \mathcal
  T_H\},\\
  \text{and}\quad U_H^p(\mathcal T_H)&:=\{v\in L^2(\Omega)\,:\,
  v|_{T}\in \mathcal P_{p-1}(T),\; \forall \, T\in \mathcal T_H\}.
\end{align*}

Since most results hold for both Raviart-Thomas and
Brezzi-Douglas-Marini spaces and $p$ is fixed in most places, we shall
use generic notation $(\bs \Sigma _H, U_H)$ to denote the pair in
$RT_H$ or $BDM_H$. The discrete problem posed on $(\boldsymbol \Sigma
_H, U_H)$ will satisfy the discrete inf-sup
condition~\cite{Brezzi.F;Fortin.M1991} from which the existence and
uniqueness of the finite element approximation $(\boldsymbol \sigma
_H,u_H)$ follows.

We shall use $\mathcal L$ and $\mathcal L_H$ to denote the
differential operators corresponding to
(\ref{eq:mixweak1})-(\ref{eq:mixweak2}) and
(\ref{eq:mixfem1})-(\ref{eq:mixfem2}), respectively. Those equations
can be formally written as
$$
\mathcal L(\boldsymbol \sigma,u)=f\quad \hbox{ and } \quad \mathcal
L_H(\boldsymbol \sigma _H, u_H)=f_H.
$$
We shall use the notation $(\bs \sigma, u)=\mcal L^{-1}f$ and $(\bs
\sigma_H, u_H)=\mcal L^{-1}_Hf_H$ to emphasis the dependence of
$f$. With an abuse of notation, we also use $\boldsymbol \sigma
=\mathcal L^{-1}f$ and $\boldsymbol \sigma _H=\mathcal L^{-1}_Hf_H$
when $\bs \sigma$ and $\bs \sigma _H$ are of interest.

\subsection{Adaptive methods through local refinement}
Let $\bs \sigma = \mcal L^{-1}f$ and $\bs \sigma _H= \mcal
L^{-1}_Hf_H$. We are mostly interested in the control of the error
$\|\bs \sigma - \bs \sigma _H\|$ which is usually more important than
control the error of scalar variable $u$ in mixed finite element
methods. Although the natural norm for the error is
$\|\bs \sigma - \bs \sigma _H\|_{H(\div)}$, we comment that, by (\ref{eq:mixweak2}) and
(\ref{eq:mixfem2}), $\|\div \bs \sigma - \div \bs \sigma
_H\|=\|f-f_H\|$ can be approximated efficiently without solving
equations and also may dominate the error $\|\bs \sigma -\bs \sigma
_H\|_{H(\div)}$; see Remark 3.4 in~\cite{Lovadina.C;Stenberg.R2006}.

The rate of the error $\|\boldsymbol \sigma -\boldsymbol \sigma _H\|$
for $\bs \sigma _H\in \bs \Sigma ^p_H(\mcal T_H)$ depends on the
regularity of the function being approximated and the regularity of
the mesh. If $\boldsymbol \sigma \in \boldsymbol H^{p+1}(\Omega)$ and
$\mathcal T_H$ is quasi-uniform with mesh size $H=\max _{T\in \mathcal
  T_H}{\rm diam}(T)$, then the following convergence result of optimal
order is well known \cite{Brezzi.F;Fortin.M1991}
\begin{equation}\label{eq:optorder}
\|\boldsymbol \sigma -\boldsymbol \sigma _H\|\leq C H^{p+1}\|\boldsymbol \sigma\|_{p+1}.
\end{equation}
The regularity result $\boldsymbol \sigma \in \boldsymbol
H^{p+1}(\Omega)$, however, may not be true in many applications,
especially for concave domains $\Omega$. Thus we cannot expect the
convergence result (\ref{eq:optorder}) on quasi-uniform grids in
general.

To improve the convergence rate, element sizes are adapted according
to the behavior of the solution. In this case, the element size in
areas of the domain where the solution is smooth can stay bounded well
away from zero, and thus the global element size is not a good measure
of the approximation rate. For this reason, when the optimality of the
convergence rate is concerned, $\# \mathcal T$, the number of
elements, is used to measure the approximation rate in the setting of
adaptive methods that involve local refinement.

We now briefly review the standard adaptive procedure. Given an
initial triangulation $\mathcal T_0$, we shall generate a sequence of
nested conforming triangulations $\mathcal T_k$ using the following
loop
\begin{equation}\label{keyloop}
  \hbox {\bf \small SOLVE } \rightarrow \hbox{ \bf \small ESTIMATE} \rightarrow \hbox{ \bf \small MARK } \rightarrow \hbox{ \bf \small REFINE}.
\end{equation}
More precisely, to get $\mathcal T_{k+1}$ from $\mathcal T_k$ we first
solve (\ref{eq:mixfem1})-(\ref{eq:mixfem2}) to get $\boldsymbol \sigma
_k$ on $\mathcal T_k$. The error is estimated using $\boldsymbol
\sigma_k$ and data. And the error estimator is used to mark a set of
of triangles or edges that are to be refined. Triangles are then
refined in such a way that the triangulation is still shape regular
and conforming in the sense of \cite{Ciarlet.P1978}.

We shall not discuss the step {\bf \small SOLVE} which deserves a
separate investigation. We assume that the solutions of the
finite-dimensional problems can be generated to any accuracy to
accomplish this in optimal space and time complexity. Multigrid-like
methods for mixed finite element methods on quasi-uniform grids can be
found in
\cite{Braess.D;Verfurth.R1990,Bramble.J;Pasciak.J;Xu.J1988,Brenner.S1992,Brenner.S1996,Gopalakrishnan.J2003,Oswald.P1997,Rusten.T;Vassilevski.P;Winther.R1996}.

The {\it a posteriori} error estimators are essential part of the {\bf
  \small ESTIMATE} step. Given a shape regular triangulation $\mcal
T_H$, let $\mcal E_H$ denote the edges of $\mcal T_H$. In this paper,
we shall use edge-wise error estimator $\eta _E$ for each edge $E\in
\mcal E_H$. See Section 4 for details.

The local error estimator $\eta _E$ is employed to mark for refinement
the elements whose error estimator is large. The way we mark these
triangles influences the efficiency of the adaptive algorithm.  In the
{\bf \small MARK} step we shall always use the marking strategy
firstly proposed by D\"orfler \cite{Dorfler.W1996} in order to prove
the convergence and the optimality of the local refinement strategy.

In the {\bf \small REFINE} step we need to carefully choose the rule
for dividing the marked triangles such that the mesh obtained by this
dividing rule is still conforming and shape regular. Such refinement
rules include red and green refinement
\cite{Bank.R;Sherman.A;Weiser.A1983}, longest edge refinement
\cite{Rivara.M1984,Rivara.M1984a}, and newest vertex bisection
\cite{Sewell.E1972,Mitchell.W1989,Mitchell.W1992}. Note that not only marked triangles get refined
but also additional triangles are refined to recovery the conformity
of triangulations. We would like to control the number of
elements added to ensure the overall optimality of the refinement
procedure. To this end, we shall use newest vertex bisection in this
article. We refer to
\cite{Mitchell.W1989,Verfurth.R1996,Binev.P;Dahmen.W;DeVore.R2004,Chen.L2006a}
for details of newest vertex bisection and only list two important
properties below.

Let $\mathcal T_k$ be a conforming triangulation refined from a shape
regular triangulation $\mathcal T_0$ using the new vertex bisection and let
$\mathcal M$ be the collection of all marked triangles going from
$\mathcal T_0$ to $\mathcal T_k$. Then
\begin{enumerate}
\item $\{\mathcal T_k\}$ is shape regular and
  the shape regularity only depends on $\mathcal
  T_0$;
\item $ \# \mathcal T_k \le \# \mathcal T_0 + C\# \mathcal M.$
\end{enumerate} 

Recently Stevenson \cite{Stevenson.R2008} showed that such results can
be extended to bisection algorithms of $n$-simplices. The optimality
of the adaptive finite element method in this paper, thus, could be extended to general space dimensions. 

\subsection{Approximation of the data}
We shall introduce the concept of data oscillation which is firstly introduced in \cite{Morin.P;Nochetto.R;Siebert.K2000}, and use it here for
the approximation of data. Such quantity measures intrinsic
information missing in the averaging process associated with finite
elements, which fails to detect fine structures of $f$.

For a set $A$, $H_A$ denotes the diameter of $A$. To simplify the
notation, we may drop the subscript if it is clear from the
context. For a triangulation $\mathcal T_H$ of $\Omega$ and a function
$f\in L^2(\Omega)$, we define a triangulation dependent norm
$$
\|H\,f\|_{0,\mathcal T_H}:=\Big (\sum _{T\in \mathcal
  T_H}H_T^2\|f\|_{0,T}^2\Big )^{1/2}.
$$ 

\begin{definition}
  Given a shape regular triangulation $\mathcal T_H$ of $\Omega$ and
  an $f\in L^2(\Omega)$, we define the data oscillation
$$
{\rm osc}(f,\mathcal T_H):=\|H(f-f_H)\|_{0,\mathcal T_H}.
$$
\end{definition}

Let $\mathcal P_N$ denote the set of triangulations constructed from
an initial triangulation $\mcal T_0$ by the newest vertex bisection
method with at most $N$ triangles. We define
$$
\|f\|_{\mathcal A^{s}_o}=\sup _{N\geq N_0}\Big (N^{s}\inf _{\mathcal T\in \mathcal P_N}{\rm osc}(f,\mathcal T)\Big ),
$$
where $N_0$ is a fixed integer representing the number of triangles in
$\mcal T_0$. We will recall a result of Binev, Dahmen and DeVore
\cite{Binev.P;Dahmen.W;DeVore.R2004} which shows that the
approximation of data can be done in an optimal way. The proof can be
found at \cite{Binev.P;Dahmen.W;DeVore.R2004}; See also
\cite{Binev.P;DeVore.R2004}.

\begin{theorem}[Binev, Dahmen and DeVore]\label{th:BDD}
  Given a tolerance $\varepsilon$, an $f\in L^2(\Omega)$ and a shape
  regular triangulation $\mathcal T_0$, there exists an algorithm
$$
\mathcal T_H=\hbox{\bf \small APPROX}(f, \mathcal T_0,\varepsilon)
$$ 
such that 
$$
{\rm osc}(f,\mathcal T_H)\leq \varepsilon,
\quad \text{and}\quad
\# \mathcal T_H-\#\mathcal T_0\leq C\|f\|_{\mathcal
  A^{1/s}_o}^{1/s}\varepsilon ^{-1/s}.
$$
\end{theorem}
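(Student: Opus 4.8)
The plan is to recast the statement as an adaptive tree-approximation problem for the local data-oscillation indicator, to solve that with a greedy refinement, and then to restore conformity. Write $\mathbb T$ for the infinite master forest of all triangles produced from $\mathcal T_0$ by repeated newest-vertex bisection, and for $T\in\mathbb T$ set $e(f,T)^2:=H_T^2\|f-f_T\|_{0,T}^2$, where $f_T$ is the $L^2(T)$-projection of $f$ onto the relevant local polynomial space ($\mathcal P_p(T)$ or $\mathcal P_{p-1}(T)$). Every finite subdivision $\mathcal T$ of $\Omega$ refined from $\mathcal T_0$ then satisfies ${\rm osc}(f,\mathcal T)^2=\sum_{T\in\mathcal T}e(f,T)^2$, so the task is to produce such a $\mathcal T$, of nearly minimal excess cardinality, with $\sum_{T}e(f,T)^2\le\varepsilon^2$.

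First I would record the two structural facts that make a greedy scheme work. If $T$ bisects into $T_1,T_2$ then $H_{T_i}\le H_T$, and since $f_T|_{T_i}$ is an admissible competitor in the minimization defining $f_{T_i}$ we get $\|f-f_{T_i}\|_{0,T_i}\le\|f-f_T\|_{0,T_i}$; hence $e(f,T_i)\le e(f,T)$ (monotonicity) and $e(f,T_1)^2+e(f,T_2)^2\le H_T^2\big(\|f-f_T\|_{0,T_1}^2+\|f-f_T\|_{0,T_2}^2\big)=e(f,T)^2$ (subadditivity); in particular any refinement of $\mathcal T$ has oscillation no larger than that of $\mathcal T$. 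Next I would run the Binev--DeVore adaptive tree-approximation algorithm on the subadditive functional $e(f,\cdot)^2$: iteratively bisect the leaf carrying the largest \emph{penalized} indicator (the modification that discounts a node sitting above a large but cheaply resolvable subtree), stopping once the accumulated oscillation drops to $\le\varepsilon$. This yields a finite, possibly non-conforming subdivision $\hat{\mathcal T}$ with ${\rm osc}(f,\hat{\mathcal T})\le\varepsilon$ whose excess cardinality $\#\hat{\mathcal T}-\#\mathcal T_0$ is within a fixed constant factor of the minimal excess cardinality over \emph{all} subdivisions with oscillation $\le c\varepsilon$. By the definition of the approximation class $\mathcal A^{1/s}_o$ — which amounts to a weak-$\ell^p$ bound ($p<2$, depending on $s$) on the local indicators $(e(f,T))$, hence a Stechkin-type decay for best adaptive partitions — that minimal cardinality is $\lesssim\|f\|_{\mathcal A^{1/s}_o}^{1/s}\varepsilon^{-1/s}$.

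Finally I would apply the conforming closure of newest-vertex bisection to $\hat{\mathcal T}$, obtaining $\mathcal T_H\in\mathcal P_N$; by the cardinality-overhead estimate for newest-vertex bisection recalled in Section~2 (property (2)) this costs only a constant factor, $\#\mathcal T_H-\#\mathcal T_0\le C(\#\hat{\mathcal T}-\#\mathcal T_0)$, and since $\mathcal T_H$ refines $\hat{\mathcal T}$ the monotonicity step gives ${\rm osc}(f,\mathcal T_H)\le{\rm osc}(f,\hat{\mathcal T})\le\varepsilon$, which is the required output. The genuine obstacle is the middle step: naive ``refine the element with the largest indicator'' greedy need not be cardinality-optimal when the best partition is deep and unbalanced, so one needs Binev and DeVore's penalized indicator together with the proof that the resulting forest stays within a constant factor of best, and the combinatorial conversion of approximation-class membership into the weak-$\ell^p$ counting; this is the technical core. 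Controlling the conforming-completion overhead is the other delicate point, and it is exactly why newest-vertex bisection is the refinement rule of choice.
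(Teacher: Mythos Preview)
The paper does not actually prove this theorem: immediately before the statement it says the proof ``can be found at \cite{Binev.P;Dahmen.W;DeVore.R2004}; see also \cite{Binev.P;DeVore.R2004}'' and leaves it at that. Your sketch is a faithful outline of what those references do --- recast oscillation as a subadditive leaf functional on the bisection forest, run the penalized-greedy tree-approximation of Binev--DeVore to get a near-best (possibly nonconforming) subdivision, then invoke the newest-vertex completion overhead bound --- so there is nothing to compare against in the paper itself.

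One small simplification: your detour through a weak-$\ell^p$ characterization of $\mathcal A^s_o$ is unnecessary here. In this paper $\|f\|_{\mathcal A^s_o}$ is \emph{defined} as $\sup_N N^s\inf_{\mathcal T\in\mathcal P_N}{\rm osc}(f,\mathcal T)$, so membership already hands you, for each $\varepsilon$, a conforming $\mathcal T^\ast$ with ${\rm osc}(f,\mathcal T^\ast)\le\varepsilon$ and $\#\mathcal T^\ast\lesssim\|f\|_{\mathcal A^s_o}^{1/s}\varepsilon^{-1/s}$. Since conforming partitions are in particular admissible subdivisions, the near-best-tree guarantee for $\hat{\mathcal T}$ bounds its cardinality by a constant times $\#\mathcal T^\ast$, and the completion step finishes. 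No Stechkin-type counting is needed.
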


\section{Quasi-orthogonality}
Unlike the primal formulation of Poisson equation, $\boldsymbol \sigma
_H$ is not the $L^2$-orthogonal projection of $\boldsymbol \sigma $ from
$\boldsymbol \Sigma $ to $\boldsymbol \Sigma _H$. Indeed the solution
$(\bs \sigma, u)$ of (\ref{eq:mixweak1})-(\ref{eq:mixweak2}) is the
saddle point of the following energy
$$
E(\bs \tau, v) = \frac{1}{2}\|\bs \tau \|^2 + (\div \tau, v) - (f, v),
\quad \tau \in \bs H(\div; \Omega), \, v\in L^2(\Omega).
$$
Namely
$$
E(\bs \sigma, u) = \inf _{\bs \sigma \in \bs H(\div; \Omega)} \sup _{v\in L^2(\Omega)} E(\bs \tau, v). 
$$
Similar result holds for the discrete solutions $(\bs \sigma _H,
u_H)$. The lack of orthogonality is the main difficulty which
complicates the convergence analysis of mixed finite element methods.

We shall use the fact the error $\boldsymbol \sigma
-\boldsymbol \sigma _H$ is orthogonal to the divergence free subspace
of $\boldsymbol \Sigma _H$ to prove a quasi-orthogonality result. In the
sequel we shall consider two conforming triangulations $\mathcal T_h$
and $\mathcal T_H$ which are nested in the sense that $\mathcal T_h$
is a refinement of $\mathcal T_H$. Therefore the finite element space
are nested i.e. $(\boldsymbol \Sigma _H, U_H)\subset (\boldsymbol
\Sigma _h, U_h)$.

\begin{lemma}\label{lm:orth}
  Given an $f\in L^2(\Omega)$ and two nested triangulation $\mcal T_h$
  and $\mcal T_H$, let $$(\boldsymbol \sigma,u) = \mathcal
  L^{-1}f,(\boldsymbol \sigma _h, u_h)=\mathcal L^{-1}_hf_h,
  (\tilde{\boldsymbol \sigma} _h, \tilde{u}_h)=\mathcal L^{-1}_hf_H,\,  
  \text{and}\, (\boldsymbol \sigma _H, u_H)=\mathcal L^{-1}_Hf_H.$$ Then
  \begin{equation}
    \label{eq:orth}
    (\bs \sigma - \bs \sigma _h, \tilde{\bs \sigma}_h-\bs \sigma _H)=0.
  \end{equation}
\end{lemma}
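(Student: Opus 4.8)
The plan is to show that the vector $\tilde{\bs\sigma}_h-\bs\sigma_H$ lies in the divergence-free subspace of $\bs\Sigma_h$, and then to exploit the Galerkin orthogonality of the error $\bs\sigma-\bs\sigma_h$ against that subspace, which falls out immediately from the first equation of the mixed formulation. Note in particular that neither $\tilde u_h$ nor $u_H$ will enter the argument; only $\bs\sigma$, $\bs\sigma_h$ and the divergence identities are needed.

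First I would record the structural property of the Raviart--Thomas and Brezzi--Douglas--Marini pairs that $\div\bs\Sigma_h=U_h$ and $\div\bs\Sigma_H=U_H$. Consequently the discrete constraint equation (\ref{eq:mixfem2}), where $v_h$ ranges over all of $U_h\supseteq\div\bs\Sigma_h$, can be localized: it forces $\div\bs\sigma_h=f_h$ and $\div\tilde{\bs\sigma}_h=f_H$ pointwise in $L^2(\Omega)$, and similarly $\div\bs\sigma_H=f_H$ on $\mcal T_H$. Since $\mcal T_h$ is a refinement of $\mcal T_H$ we have $\bs\Sigma_H\subset\bs\Sigma_h$, so $\tilde{\bs\sigma}_h-\bs\sigma_H\in\bs\Sigma_h$, and subtracting the two divergence identities gives $\div(\tilde{\bs\sigma}_h-\bs\sigma_H)=f_H-f_H=0$. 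Hence $\tilde{\bs\sigma}_h-\bs\sigma_H$ belongs to the divergence-free subspace $\bs Z_h:=\{\bs\tau_h\in\bs\Sigma_h:\div\bs\tau_h=0\}$.

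Next I would establish the orthogonality of the error against $\bs Z_h$. Because $\bs\Sigma_h\subset\bs\Sigma$, the first continuous equation (\ref{eq:mixweak1}) tested with $\bs\tau_h\in\bs\Sigma_h$ reads $(\bs\sigma,\bs\tau_h)-(\div\bs\tau_h,u)=0$, while the first discrete equation (\ref{eq:mixfem1}) reads $(\bs\sigma_h,\bs\tau_h)-(\div\bs\tau_h,u_h)=0$. Subtracting,
\[
(\bs\sigma-\bs\sigma_h,\bs\tau_h)=(\div\bs\tau_h,\,u-u_h),\qquad\forall\,\bs\tau_h\in\bs\Sigma_h.
\]
Choosing $\bs\tau_h=\tilde{\bs\sigma}_h-\bs\sigma_H$, which by the previous step is divergence-free, makes the right-hand side vanish, and this is precisely (\ref{eq:orth}).

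I do not expect a serious obstacle here; the one point requiring care is the justification that $\div\bs\Sigma_h\subseteq U_h$ (equivalently, the commuting-diagram property of the RT/BDM interpolation operators), which is what allows passage from the weak constraint (\ref{eq:mixfem2}) to the pointwise identity $\div\bs\sigma_h=f_h$. Once that standard fact is invoked, the remainder of the proof is purely algebraic and, as claimed, works uniformly in the polynomial order $p$ and for both the Raviart--Thomas and Brezzi--Douglas--Marini families.
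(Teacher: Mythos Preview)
Your proof is correct and follows essentially the same route as the paper: one observes that $\tilde{\bs\sigma}_h-\bs\sigma_H\in\bs\Sigma_h$ with $\div(\tilde{\bs\sigma}_h-\bs\sigma_H)=f_H-f_H=0$, and then the error equation $(\bs\sigma-\bs\sigma_h,\bs\tau_h)=(\div\bs\tau_h,\,u-u_h)$ obtained by subtracting (\ref{eq:mixweak1}) and (\ref{eq:mixfem1}) finishes the argument. The paper's proof is just a terser rendering of the same two steps; your extra care in justifying $\div\bs\Sigma_h\subseteq U_h$ so that the constraint equation upgrades to a pointwise identity is precisely the point being used implicitly there.
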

\begin{proof}
  Since $\tilde{\bs \sigma}_h-\bs \sigma _H\in \Sigma _h$, by
  (\ref{eq:mixfem1})-(\ref{eq:mixfem2}), we have
$$
(\boldsymbol \sigma -\boldsymbol \sigma _h, \tilde{\boldsymbol \sigma
  }_h-\boldsymbol \sigma _H)=(u-u_h,{\rm div\,} (\tilde{\boldsymbol
  \sigma}_h-\boldsymbol \sigma _H))=(u-u_h, f_H-f_H)=0.
$$ 
\end{proof}

To prove quasi-orthogonality, we need the following discrete stability result
\begin{equation}\label{eq:disstablity}
  \|\boldsymbol \sigma _h-\tilde {\boldsymbol \sigma} _h\|\leq \sqrt{C_0}\,{\rm osc}(f_h, \mathcal T_H),
\end{equation}
where the constant $C_0$ depends only on the shape regularity of 
$\mathcal T_H$. We shall leave the proof of (\ref{eq:disstablity}) to the next section and use it
 to derive the quasi-orthogonality result. 

\begin{theorem}\label{th:quasiorth}
  Given an $f\in L^2(\Omega)$ and two nested triangulations $\mcal T_h$
  and $\mcal T_H$, let $\boldsymbol \sigma =\mathcal
  L^{-1}f$, $\boldsymbol \sigma _h=\mathcal L^{-1}_hf_h$, and
  $\boldsymbol \sigma _H=\mathcal L^{-1}_Hf_H$.  Then
\begin{equation}\label{eq:quasiorth1}
  (\boldsymbol \sigma -\boldsymbol \sigma _h,\boldsymbol \sigma _h-\boldsymbol \sigma _H)\leq \sqrt{C_0}\,\|\boldsymbol \sigma -\boldsymbol \sigma _h\|{\rm osc}(f_h, \mathcal T_H),
\end{equation}
Thus for any $\delta >0$,
\begin{equation}\label{eq:quasiorth2}
  (1-\delta)\|\boldsymbol \sigma -\boldsymbol \sigma _h\|^2 \leq \|\boldsymbol \sigma -\boldsymbol \sigma _H\|^2 - \|\boldsymbol \sigma _h-\boldsymbol \sigma _H\|^2+\frac{C_0}{\delta}{\rm osc}^2(f_h,\mathcal T_H),
\end{equation}
and in particular when ${\rm osc}(f_h,\mathcal T_H)=0$, 
\begin{equation}
  \label{eq:orth1}
  \|\boldsymbol \sigma -\boldsymbol \sigma _h\|^2 = \|\boldsymbol \sigma -\boldsymbol \sigma _H\|^2 - \|\boldsymbol \sigma _h-\boldsymbol \sigma _H\|^2.
\end{equation}
\end{theorem}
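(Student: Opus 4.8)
The plan is to exploit the splitting $\boldsymbol\sigma_h-\boldsymbol\sigma_H=(\boldsymbol\sigma_h-\tilde{\boldsymbol\sigma}_h)+(\tilde{\boldsymbol\sigma}_h-\boldsymbol\sigma_H)$, using Lemma~\ref{lm:orth} to annihilate the second summand when it is paired against $\boldsymbol\sigma-\boldsymbol\sigma_h$ and the discrete stability bound~\eqref{eq:disstablity} to control the first. First I would compute
$$
(\boldsymbol\sigma-\boldsymbol\sigma_h,\boldsymbol\sigma_h-\boldsymbol\sigma_H)
=(\boldsymbol\sigma-\boldsymbol\sigma_h,\boldsymbol\sigma_h-\tilde{\boldsymbol\sigma}_h)
+(\boldsymbol\sigma-\boldsymbol\sigma_h,\tilde{\boldsymbol\sigma}_h-\boldsymbol\sigma_H),
$$
where the last inner product vanishes by Lemma~\ref{lm:orth}, since $\tilde{\boldsymbol\sigma}_h-\boldsymbol\sigma_H\in\boldsymbol\Sigma_h$. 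Applying the Cauchy--Schwarz inequality to the remaining term and then~\eqref{eq:disstablity} yields the two-sided estimate
$$
\bigl|(\boldsymbol\sigma-\boldsymbol\sigma_h,\boldsymbol\sigma_h-\boldsymbol\sigma_H)\bigr|
\le\|\boldsymbol\sigma-\boldsymbol\sigma_h\|\,\|\boldsymbol\sigma_h-\tilde{\boldsymbol\sigma}_h\|
\le\sqrt{C_0}\,\|\boldsymbol\sigma-\boldsymbol\sigma_h\|\,{\rm osc}(f_h,\mathcal T_H),
$$
which in particular gives~\eqref{eq:quasiorth1}.

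For~\eqref{eq:quasiorth2} I would start from the elementary identity obtained by expanding $\|\boldsymbol\sigma-\boldsymbol\sigma_H\|^2=\|(\boldsymbol\sigma-\boldsymbol\sigma_h)+(\boldsymbol\sigma_h-\boldsymbol\sigma_H)\|^2$, namely
$$
\|\boldsymbol\sigma-\boldsymbol\sigma_h\|^2
=\|\boldsymbol\sigma-\boldsymbol\sigma_H\|^2-\|\boldsymbol\sigma_h-\boldsymbol\sigma_H\|^2
-2(\boldsymbol\sigma-\boldsymbol\sigma_h,\boldsymbol\sigma_h-\boldsymbol\sigma_H).
$$
Bounding the cross term with the two-sided estimate just established, $-2(\boldsymbol\sigma-\boldsymbol\sigma_h,\boldsymbol\sigma_h-\boldsymbol\sigma_H)\le 2\sqrt{C_0}\,\|\boldsymbol\sigma-\boldsymbol\sigma_h\|\,{\rm osc}(f_h,\mathcal T_H)$, and then absorbing the factor $\|\boldsymbol\sigma-\boldsymbol\sigma_h\|$ via Young's inequality in the form $2\sqrt{C_0}\,\|\boldsymbol\sigma-\boldsymbol\sigma_h\|\,{\rm osc}(f_h,\mathcal T_H)\le\delta\|\boldsymbol\sigma-\boldsymbol\sigma_h\|^2+\tfrac{C_0}{\delta}{\rm osc}^2(f_h,\mathcal T_H)$, I would move the $\delta\|\boldsymbol\sigma-\boldsymbol\sigma_h\|^2$ term to the left-hand side to obtain~\eqref{eq:quasiorth2}.

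Finally, for~\eqref{eq:orth1}: when ${\rm osc}(f_h,\mathcal T_H)=0$ the estimate~\eqref{eq:disstablity} forces $\boldsymbol\sigma_h=\tilde{\boldsymbol\sigma}_h$, so the first inner product above is identically zero and Lemma~\ref{lm:orth} upgrades to genuine orthogonality $(\boldsymbol\sigma-\boldsymbol\sigma_h,\boldsymbol\sigma_h-\boldsymbol\sigma_H)=0$; the displayed identity then collapses to the Pythagorean relation~\eqref{eq:orth1}. I do not expect a serious obstacle here: the one genuinely substantial ingredient, the discrete stability bound~\eqref{eq:disstablity}, is deferred to the next section, and the only subtlety is that Lemma~\ref{lm:orth} supplies orthogonality against $\tilde{\boldsymbol\sigma}_h-\boldsymbol\sigma_H$ rather than against $\boldsymbol\sigma_h-\boldsymbol\sigma_H$ — which is precisely why the splitting, and hence the appearance of the data oscillation term, is unavoidable.
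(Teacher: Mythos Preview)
Your proposal is correct and follows essentially the same route as the paper: introduce $\tilde{\boldsymbol\sigma}_h=\mathcal L_h^{-1}f_H$, kill the pairing against $\tilde{\boldsymbol\sigma}_h-\boldsymbol\sigma_H$ via Lemma~\ref{lm:orth}, bound the remaining piece by Cauchy--Schwarz and~\eqref{eq:disstablity}, then expand $\|\boldsymbol\sigma-\boldsymbol\sigma_H\|^2$ and apply Young's inequality. Your observation that one actually needs the two-sided bound $|(\boldsymbol\sigma-\boldsymbol\sigma_h,\boldsymbol\sigma_h-\boldsymbol\sigma_H)|\le\sqrt{C_0}\,\|\boldsymbol\sigma-\boldsymbol\sigma_h\|\,{\rm osc}(f_h,\mathcal T_H)$ to derive~\eqref{eq:quasiorth2} is a good one --- the paper uses it implicitly without stating it.
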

\begin{proof}
  Let us introduce an intermediate solution $\tilde {\boldsymbol
    \sigma} _h=\mathcal L^{-1}_hf_H.$ By Lemma \ref{lm:orth},
  $(\boldsymbol \sigma -\boldsymbol \sigma _h,\tilde {\boldsymbol
    \sigma} _h-\boldsymbol \sigma _H)=0.  $ Thus
$$
(\boldsymbol \sigma -\boldsymbol \sigma _h,\boldsymbol \sigma
_h-\boldsymbol \sigma _H)=(\boldsymbol \sigma -\boldsymbol \sigma
_h,\boldsymbol \sigma _h-\tilde {\boldsymbol \sigma} _h)\leq
\|\boldsymbol \sigma -\boldsymbol \sigma _h\|\|\boldsymbol \sigma
_h-\tilde {\boldsymbol \sigma} _h\|.
$$
(\ref{eq:quasiorth1}) then follows from the inequality
(\ref{eq:disstablity}).

By the trivial identity $\bs \sigma - \bs \sigma _H = \bs \sigma - \bs
\sigma _h + \bs \sigma _h - \bs \sigma _H$, we have
$$
\|\boldsymbol \sigma-\boldsymbol \sigma _H\|^2=\|\boldsymbol \sigma
-\boldsymbol \sigma _h\|^2+\|\boldsymbol \sigma _h-\boldsymbol \sigma
_H\|^2+2(\boldsymbol \sigma -\boldsymbol \sigma _h,\boldsymbol \sigma
_h-\boldsymbol \sigma _H)
$$
When ${\rm osc}(f_h,\mcal T_H)=0$, by (\ref{eq:quasiorth1}),
$(\boldsymbol \sigma -\boldsymbol \sigma _h,\boldsymbol \sigma
_h-\boldsymbol \sigma _H)=0$ and thus (\ref{eq:orth1}) follows. In
general, we use
\begin{align*}
  \|\boldsymbol \sigma-\boldsymbol \sigma _H\|^2&=\|\boldsymbol \sigma -\boldsymbol \sigma _h\|^2+\|\boldsymbol \sigma _h-\boldsymbol \sigma _H\|^2+2(\boldsymbol \sigma -\boldsymbol \sigma _h,\boldsymbol \sigma _h-\boldsymbol \sigma _H)\\
  &\geq \|\boldsymbol \sigma -\boldsymbol \sigma _h\|^2+\|\boldsymbol \sigma _h-\boldsymbol \sigma _H\|^2-2\sqrt{C_0}\|\boldsymbol \sigma -\boldsymbol \sigma _h\|{\rm osc}(f,\mathcal T_H)\\
  &\geq \|\boldsymbol \sigma _h -\boldsymbol \sigma
  _H\|^2+(1-\delta)\|\boldsymbol \sigma -\boldsymbol \sigma
  _h\|^2-\frac{C_0}{\delta}{\rm osc}^2(f_h,\mathcal T_H),
\end{align*}
to prove (\ref{eq:quasiorth2}). In the last step, we have used the
inequality
$$
2ab\leq \delta a^2+\frac{1}{\delta}b^2, \quad \text{ for any }\, \delta >0.
$$
\end{proof}

A similar quasi-orthogonality result was obtained by Carstensen and
Hoppe \cite{Carstensen.C;Hoppe.R2006} for the lowest order
Raviart-Thomas spaces using a special relation to the non-conforming
finite element. Such relation for high order elements and
Brezzi-Douglas-Marini spaces are not easy to establish; see
\cite{Arnold.D;Brezzi.F1985} and
\cite{Cockburn.B;Gopalakrishnan.J2004,Cockburn.B;Gopalakrishnan.J2005a,Cockburn.B;Gopalakrishnan.J2005}
for discussion on this relation. In contrast the approach we used here
is more straight-forward.

\begin{remark}\label{remark} \rm 
The oscillation term ${\rm osc}(f_h,\mcal T_H)$ in (\ref{eq:quasiorth1}) and (\ref{eq:quasiorth2}) depends on both $\mcal T_h$ and $\mcal T_H$.  It can be changed to the quantity ${\rm osc}(f,\mcal T_H)$ which only depends on $\mcal T_H$. Indeed for each $T\in \mcal T_H$, we have
$$
\|f_h-f_H\|_{0,T}=\|Q_h(I-Q_H)f\|_{0,T}\leq \|f-f_H\|_{0,T},
$$
and thus ${\rm osc}(f_h,\mcal T_H)\leq {\rm osc}(f,\mcal T_H)$. This change is important for the construction of convergent AMFEM by showing the reduction of  ${\rm osc}(f,\mcal T_H)$.
\end{remark}

\section{Discrete stability for perturbation of data}
In this section, we shall prove the discrete stability result. We begin with a stability result in the continuous case. Let $u\in
H_0^1(\Omega)$ be the solution of the primal weak formulation
(\ref{primary}) of Poisson equation. Then $(-\nabla u, u)$ is the
solution to the dual weak formulation
(\ref{eq:mixweak1})-(\ref{eq:mixweak2}). The stability
result $\|\nabla u\| \leq \|f\|_{-1}$ is well-known in the
literature. The norm $\|f\|_{-1}$, however, is not easy to
compute. Instead we shall make use of the oscillation of data to bound
it.
\begin{theorem}\label{th:stabilityosc}
  Given a shape regular triangulation $\mathcal T_H$ of $\Omega$ and
  $f\in L^2(\Omega)$, let $(\bs \sigma, u)=\mathcal L^{-1}f$ and
  $(\tilde {\bs \sigma}, \tilde{u})=\mathcal L^{-1}f_H$,
  respectively. Then there exists a constant $C_0$ depending only on the
  shape regularity of $\mathcal T_H$ such that
  \begin{equation}
    \label{eq:stability}
    \|\boldsymbol \sigma -\tilde {\boldsymbol \sigma}\|\leq \sqrt{C_0}\, {\rm osc} (f,\mathcal T_H).
  \end{equation}
\end{theorem}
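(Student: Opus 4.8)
The plan is to estimate $\|\bs\sigma - \tilde{\bs\sigma}\|$ by exploiting that $\bs\sigma - \tilde{\bs\sigma} = -\nabla(u - \tilde u)$ and that this difference is divergence-free. From \eqref{eq:mixweak1}--\eqref{eq:mixweak2} applied to both $\mcal L^{-1}f$ and $\mcal L^{-1}f_H$, we have $\bs\sigma + \nabla u = 0$, $\tilde{\bs\sigma} + \nabla\tilde u = 0$, and $\div(\bs\sigma - \tilde{\bs\sigma}) = f - f_H$. Since $u,\tilde u \in H_0^1(\Omega)$, the function $w := u - \tilde u$ solves $-\Delta w = f - f_H$ in $H_0^1(\Omega)$, and $\bs\sigma - \tilde{\bs\sigma} = -\nabla w$, so $\|\bs\sigma - \tilde{\bs\sigma}\| = \|\nabla w\|$. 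Testing the weak form of $-\Delta w = f - f_H$ against $w$ itself gives $\|\nabla w\|^2 = (f - f_H, w)$. The key point is that $f - f_H$ has zero mean on each $T \in \mcal T_H$, so $(f - f_H, w) = (f - f_H, w - w_H)$ for \emph{any} $w_H$ that is piecewise constant on $\mcal T_H$; choosing $w_H|_T$ to be the average of $w$ on $T$ and applying the Poincaré inequality on each element, $\|w - w_H\|_{0,T} \leq C_P H_T \|\nabla w\|_{0,T}$, yields
\begin{equation*}
  \|\nabla w\|^2 = (f - f_H, w - w_H) \leq \sum_{T\in\mcal T_H} \|f - f_H\|_{0,T}\, C_P H_T \|\nabla w\|_{0,T} \leq C_P \Big(\sum_{T} H_T^2 \|f-f_H\|_{0,T}^2\Big)^{1/2} \|\nabla w\|,
\end{equation*}
where the last step is Cauchy--Schwarz over the elements. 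Dividing by $\|\nabla w\|$ gives $\|\nabla w\| \leq C_P\,\osc(f,\mcal T_H)$, which is \eqref{eq:stability} with $\sqrt{C_0} = C_P$, the latter depending only on the shape regularity of $\mcal T_H$ through the element-wise Poincaré constant.

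The only mild obstacle is being careful that the Poincaré constant on a general shape-regular triangle scales correctly, i.e. that $C_P$ can be taken uniform over the family: this follows by a standard scaling argument mapping each $T$ to a reference element, using that the number of reference shapes is controlled by shape regularity, so I would simply cite the mean-value Poincaré inequality on shape-regular simplices rather than reprove it. I would also note in passing that $f - f_H \perp U_H \supset$ (piecewise constants on $\mcal T_H$), which is exactly what licenses subtracting $w_H$; this is where the structure of the mixed method — the $L^2$-projection of the data — enters. Everything else is a one-line energy identity plus Cauchy--Schwarz.
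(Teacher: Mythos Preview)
Your proof is correct and follows essentially the same route as the paper: both arrive at $\|\bs\sigma-\tilde{\bs\sigma}\|^2=(f-f_H,\,u-\tilde u)$, subtract a local projection of $u-\tilde u$ using the orthogonality of $f-f_H$ to $U_H$, apply an element-wise Poincar\'e/approximation estimate, and finish with Cauchy--Schwarz (the paper subtracts the full $L^2$-projection $Q_H$ rather than just the piecewise-constant mean, but this is immaterial). One small slip: you say at the outset that $\bs\sigma-\tilde{\bs\sigma}$ is ``divergence-free,'' which contradicts your own correct next line $\div(\bs\sigma-\tilde{\bs\sigma})=f-f_H$; just delete that phrase.
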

\begin{proof}
By (\ref{eq:mixweak1}) and (\ref{eq:mixweak2}), we have
$$
\|\bs \sigma - \tilde{\bs \sigma}\|^2=(\bs \sigma - \tilde{\bs
  \sigma},\bs \sigma - \tilde{\bs \sigma}) = (\div (\bs \sigma -
\tilde{\bs \sigma}), u-\tilde{u})=(f-f_H, u-\tilde{u}).
$$
Let $v$ be the solution of primal weak formulation of Poisson
equation with data $f-f_H$. Then $v=u-\tilde{u}$ and $-\nabla v=\bs
\sigma -\tilde{\bs \sigma}$. Recall that $Q_H: L^2(\Omega) \to U_H$ is the
$L^2$ projection into discontinuous polynomial spaces. So for each triangle $T\in \mcal T_H$, $(f-f_H, v_H)_{T}=0$ for any $v_H\in \mcal P_p(T)$. Therefore
\begin{align*}
  \|\bs \sigma - \tilde{\bs \sigma}\|^2&= (f-f_H,v)\\
  &=\sum _{T \in \mathcal T_H}(f-f_H,v-Q_Hv)_T\\
  &\leq \sqrt{C_0}\sum _{T \in \mathcal T_H}\|H(f-f_H)\|_{0,T}\|\nabla v\|_{0,T}\\
  &\leq \sqrt{C_0}\Big (\sum _{T \in \mathcal T_H}\|H(f-f_H)\|_{0,T}^2\Big
  )^{1/2}\|\bs \sigma - \tilde{\bs \sigma}\|.
\end{align*}
In the second step, we have used the error estimate 
$$
\|v-Q_Hv\|_{0,T} \leq \sqrt{C_0} H_T\|\nabla v\|_{0,T},
$$
which can be easily proved by Bramble-Hilbert lemma and the scaling
argument. The constant $C_0$ only depends on the shape regularity of
$\mathcal T_H$. The desired result then follows by canceling one
$\|\bs \sigma - \tilde{\bs \sigma}\|$.
\end{proof}

In the proof of Theorem \ref{th:stabilityosc}, we use the local error
estimate
$$
\|u-Q_Hu\|_{0,T}\leq \sqrt{C_0} H_T \|\nabla u\|_{0,T}=\sqrt{C_0} H_T\|\bs \sigma\|_T,
$$
for $u\in H_0^1(\Omega)$ and $\bs \sigma = -\nabla u$. The main
difficulty in the discrete case is that $u_h\in U_h \nsubseteq
H_0^1(\Omega)$. However we still have a similar localized error
estimate for $u_h-Q_Hu_h$.

\begin{lemma}\label{lm:QhQH}
  Let $\mcal T_h$ and $\mcal T_H$ be two nested triangulations, and
  let $(\bs \sigma _h, u_h)=\mcal L^{-1}_hf_h$.  Then for any $T\in
  \mcal T_H$, we have
  \begin{equation}
    \label{eq:QhQH}
    \|u_h-Q_Hu_h\|_{0,T} \leq \sqrt{C_0} H_T\|\bs \sigma _h\|_{0,T}.
  \end{equation}
\end{lemma}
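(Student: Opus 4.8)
The plan is to run the argument of Theorem~\ref{th:stabilityosc} one level down, replacing the flux $-\nabla v$ of a continuous primal solution (which there reproduced the scalar error through integration by parts) by a \emph{discrete} field $\boldsymbol\tau_h\in\bs\Sigma_h$ supported on the single macro-element $T$. Fix $T\in\mcal T_H$ and set $w:=u_h-Q_Hu_h$. Since $U_H$ is discontinuous, $Q_H$ restricted to $T$ is the $L^2(T)$-projection onto $\mcal P_p(T)$ (resp.\ $\mcal P_{p-1}(T)$ for BDM), so $w\perp\mcal P_p(T)$ in $L^2(T)$; in particular $\int_T w=0$, and since $Q_Hu_h|_T\in\mcal P_p(T)$,
\[
\|w\|_{0,T}^2=(w,u_h-Q_Hu_h)_T=(w,u_h)_T .
\]
Hence it suffices to produce $\boldsymbol\tau_h\in\bs\Sigma_h$ with $\boldsymbol\tau_h\equiv0$ outside $\bar T$, $\div\boldsymbol\tau_h=w$ on $T$, and $\|\boldsymbol\tau_h\|_{0,T}\le\sqrt{C_0}\,H_T\|w\|_{0,T}$: testing (\ref{eq:mixfem1}) with such a $\boldsymbol\tau_h$ and using $\operatorname{supp}\boldsymbol\tau_h\subset\bar T$ gives
\[
\|w\|_{0,T}^2=(w,u_h)_T=(\div\boldsymbol\tau_h,u_h)=(\bs\sigma_h,\boldsymbol\tau_h)=(\bs\sigma_h,\boldsymbol\tau_h)_T\le\|\bs\sigma_h\|_{0,T}\|\boldsymbol\tau_h\|_{0,T}\le\sqrt{C_0}\,H_T\|\bs\sigma_h\|_{0,T}\|w\|_{0,T},
\]
and (\ref{eq:QhQH}) follows after dividing by $\|w\|_{0,T}$ (the case $w=0$ being trivial).

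To build $\boldsymbol\tau_h$ I would exploit nestedness: $\mcal T_h|_T$ is a conforming, shape-regular triangulation of $T$, and the fields of $\bs\Sigma_h$ that vanish outside $\bar T$ are precisely the Raviart--Thomas (resp.\ Brezzi--Douglas--Marini) fields on $\mcal T_h|_T$ with zero normal trace on $\partial T$, extended by zero; such an extension remains in $\bs H(\div;\Omega)$, hence in $\bs\Sigma_h$. On $\mcal T_h|_T$ I solve the local discrete mixed (Neumann) problem with right-hand side $w$: find $\boldsymbol\tau_h$ in that subspace and $z_h\in U_h|_T/\R$ with $(\boldsymbol\tau_h,\bs\mu)_T-(\div\bs\mu,z_h)_T=0$ for all admissible $\bs\mu$ and $(\div\boldsymbol\tau_h,v)_T=(w,v)_T$ for all $v\in U_h|_T$. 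This is well posed by the discrete inf--sup condition for the RT/BDM pair, whose constant depends only on the shape regularity of $\mcal T_h|_T$, hence only on that of $\mcal T_0$; since $w\in U_h|_T$ has zero mean we obtain $\div\boldsymbol\tau_h=w$ on $T$ exactly.

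The remaining point is the $H_T$-scaled bound, which I would get by the usual scaling argument: map $T$ to a reference triangle $\widehat T$ by an affine homothety of ratio $H_T$ and pull $\boldsymbol\tau_h$ back by the Piola transform, under which the local problem on $T$ becomes the analogous problem on $\widehat T$ with data $\widehat w$ obeying $\|\widehat w\|_{0,\widehat T}=H_T\|w\|_{0,T}$ while $\|\widehat{\boldsymbol\tau}_h\|_{0,\widehat T}=\|\boldsymbol\tau_h\|_{0,T}$. Uniform stability of the Neumann problem on $\widehat T$ together with the (uniform) discrete inf--sup for its conforming refinements gives $\|\widehat{\boldsymbol\tau}_h\|_{0,\widehat T}\le\sqrt{C_0}\,\|\widehat w\|_{0,\widehat T}$, and scaling back yields the claim; collecting the constants, $C_0$ depends only on the shape regularity of $\mcal T_H$ (and of $\mcal T_h$, itself controlled by $\mcal T_0$).

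The genuinely delicate step is the existence, with the correct power of $H_T$ and a uniform constant, of this local discrete right inverse of the divergence on the macro-element $T$ equipped with the arbitrary refinement $\mcal T_h|_T$; this is where shape regularity and the scaling argument do the work, and it is the discrete analogue of the elementary estimate $\|v-Q_Hv\|_{0,T}\le\sqrt{C_0}H_T\|\nabla v\|_{0,T}$ used in Theorem~\ref{th:stabilityosc}. Everything else is integration by parts and Cauchy--Schwarz, exactly as in the continuous case.
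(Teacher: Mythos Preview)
Your approach is correct but genuinely different from the paper's. Both arguments hinge on producing a localized test function in $\bs\Sigma_h$ that recovers $w=u_h-Q_Hu_h$ through its divergence, but they build it differently.

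The paper lifts $w\in L^2_0(T)$ to a \emph{continuous} field $\bs\tau\in\bs H^1_0(T)$ via the right inverse of the divergence (Lemma~\ref{lm:div}), extends by zero, and then tests the first discrete equation with $(\Pi_h-\Pi_H)\bs\tau$. The commuting diagram $\div\Pi_h=Q_h\div$ turns $(u_h,(Q_h-Q_H)\div\bs\tau)$ into $(\bs\sigma_h,(\Pi_h-\Pi_H)\bs\tau)_T$, and the factor $H_T$ drops out of the standard approximation estimate $\|\bs\tau-\Pi_H\bs\tau\|_{0,T}\le CH_T\|\bs\tau\|_{1,T}$.

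You instead build a \emph{discrete} right inverse directly, by solving the local mixed Neumann problem on $\mcal T_h|_T$ and invoking the uniform discrete inf--sup; the $H_T$ then comes from Piola scaling to a reference triangle. This is cleaner in that you never touch $\Pi_H$ or the commuting property explicitly, and the test function already lives in $\bs\Sigma_h$. On the other hand, your ``uniform discrete inf--sup on arbitrary shape-regular refinements of $\widehat T$'' is not free: its standard proof is precisely the Fortin argument, which uses the continuous right inverse of $\div$ and the canonical interpolation $\Pi_h$---the very ingredients the paper invokes directly. So the two proofs share the same underlying machinery; the paper unpacks it, while you invoke it as a black box plus a scaling step. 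Your scaling computation is correct (with the understanding that $\widehat w$ denotes the effective data $|\det DF|\,w\circ F$ on the reference element, and that for a general shape-regular $T$ the equalities become equivalences with constants controlled by the shape regularity of $\mcal T_H$).
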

The proof of this lemma is technical and postponed to the end of this
section. We use it to prove the following theorem.

\begin{theorem}\label{th:discretestability} 
  Let $\mathcal T_h$ and $\mathcal T_H$ be two nested conforming
  triangulations. Let $\tilde {\boldsymbol \sigma} _h=\mathcal
  L^{-1}_hf_H$ and $\boldsymbol \sigma _h=\mathcal L^{-1}_hf_h$. Then
  there exists a constant $C_0$, depending only on the shape
  regularity of $\mathcal T_H$ such that
\begin{equation}\label{eq:disstablity2}
  \|\boldsymbol \sigma _h-\tilde {\boldsymbol \sigma} _h\|\leq \sqrt{C_0}\,{\rm osc}(f_h, \mathcal T_H).
\end{equation}
\end{theorem}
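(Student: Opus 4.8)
The plan is to mirror the proof of Theorem~\ref{th:stabilityosc} step by step, the only change being that the continuous local estimate $\|v-Q_Hv\|_{0,T}\le\sqrt{C_0}H_T\|\nabla v\|_{0,T}$ is replaced by its discrete counterpart, Lemma~\ref{lm:QhQH}. The observation that makes this substitution legitimate is that the difference of two discrete solutions is itself a discrete solution: by linearity of (\ref{eq:mixfem1})-(\ref{eq:mixfem2}), the pair $(\bs\sigma_h-\tilde{\bs\sigma}_h,\, u_h-\tilde u_h)$ solves the discrete mixed problem with right-hand side $f_h-f_H\in U_h$, i.e.\ $(\bs\sigma_h-\tilde{\bs\sigma}_h,\,u_h-\tilde u_h)=\mcal L_h^{-1}(f_h-f_H)$. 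Hence Lemma~\ref{lm:QhQH} applies verbatim with $\bs\sigma_h$ replaced by $\bs\sigma_h-\tilde{\bs\sigma}_h$ and $u_h$ by $u_h-\tilde u_h$.

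First I would test (\ref{eq:mixfem1}) for the two solutions with $\bs\tau_h=\bs\sigma_h-\tilde{\bs\sigma}_h$ and then (\ref{eq:mixfem2}) with $v_h=u_h-\tilde u_h\in U_h$ to obtain
\[
\|\bs\sigma_h-\tilde{\bs\sigma}_h\|^2=(\div(\bs\sigma_h-\tilde{\bs\sigma}_h),\,u_h-\tilde u_h)=(f_h-f_H,\,u_h-\tilde u_h).
\]
Next, since $U_H\subset U_h$ gives $f_H=Q_Hf_h$, the data error $f_h-f_H$ is $L^2$-orthogonal to $U_H$; subtracting $Q_H(u_h-\tilde u_h)\in U_H$ and localizing over $\mcal T_H$,
\[
\|\bs\sigma_h-\tilde{\bs\sigma}_h\|^2=\sum_{T\in\mcal T_H}\big(f_h-f_H,\,(I-Q_H)(u_h-\tilde u_h)\big)_T\le\sum_{T\in\mcal T_H}\|f_h-f_H\|_{0,T}\,\|(I-Q_H)(u_h-\tilde u_h)\|_{0,T}.
\]
Now I would invoke Lemma~\ref{lm:QhQH} to bound each $\|(I-Q_H)(u_h-\tilde u_h)\|_{0,T}$ by $\sqrt{C_0}\,H_T\|\bs\sigma_h-\tilde{\bs\sigma}_h\|_{0,T}$, apply the discrete Cauchy--Schwarz inequality over the elements of $\mcal T_H$, and recognize $\big(\sum_T H_T^2\|f_h-f_H\|_{0,T}^2\big)^{1/2}=\|H(f_h-f_H)\|_{0,\mcal T_H}={\rm osc}(f_h,\mcal T_H)$, which gives $\|\bs\sigma_h-\tilde{\bs\sigma}_h\|^2\le\sqrt{C_0}\,{\rm osc}(f_h,\mcal T_H)\,\|\bs\sigma_h-\tilde{\bs\sigma}_h\|$; cancelling one factor yields (\ref{eq:disstablity2}).

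Since the genuinely technical ingredient, the local estimate (\ref{eq:QhQH}), is established separately, the present theorem reduces to the linearity and orthogonality bookkeeping above, so there is no real obstacle here — the only point requiring care is confirming that the hypotheses of Lemma~\ref{lm:QhQH} are met by the difference solution, which is clear because $f_h-f_H$ already lies in $U_h$ and therefore equals its own $L^2$-projection onto $U_h$.
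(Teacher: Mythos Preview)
Your proof is correct and follows essentially the same route as the paper's: test the difference equations with $\bs\tau_h=\bs\sigma_h-\tilde{\bs\sigma}_h$ and $v_h=u_h-\tilde u_h$, use $f_H=Q_Hf_h$ to insert $(I-Q_H)v_h$, apply Lemma~\ref{lm:QhQH} elementwise, and finish with Cauchy--Schwarz and cancellation. Your explicit remark that $(\bs\sigma_h-\tilde{\bs\sigma}_h,\,u_h-\tilde u_h)=\mcal L_h^{-1}(f_h-f_H)$ is exactly the linearity observation the paper uses implicitly when it invokes (\ref{eq:QhQH}) for $v_h=u_h-\tilde u_h$.
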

\begin{proof}
Recall
that $\bs\sigma _h-\tilde{\bs\sigma}_h$ satisfies the equation
\begin{align}
  (\bs\sigma _h-\tilde{\bs\sigma}_h, \bs\tau_h) & =(u_h-\tilde u_h,
  \div
  \bs\tau _h), \quad &\forall \bs\tau _h \in \bs\Sigma _h \label{eq:diff1}\\
  (\div (\bs\sigma _h-\tilde{\bs\sigma}_h),v_h) &= (f_h-f_H,v_h),
  \quad &\forall v_h\in U_h \label{eq:diff2}.
\end{align}
We then choose $\bs\tau _h= \bs\sigma _h-\tilde{\bs\sigma}_h$ in
(\ref{eq:diff1}) and $v_h=u_h-\tilde u_h$ in (\ref{eq:diff2}) to
obtain
$$
\|\bs\sigma _h-\tilde{\bs\sigma}_h\|^2 = (u_h-\tilde u_h,
\div (\bs\sigma _h-\tilde{\bs\sigma}_h))
 = (v_h, f_h-f_H) = (v_h-Q_Hv_h, f_h-f_H).
$$
In the third step, we use the fact $f_H=Q_Hf=Q_Hf_h$ since $\mcal T_h$
and $\mcal T_H$ are nested. Thanks to (\ref{eq:QhQH}), we have
\begin{align*}
  \|\bs\sigma _h-\tilde{\bs\sigma}_h\|^2 & = \sum _{T\in \mcal
    T_H}(v_h-Q_Hv_h, f_h-f_H)_T\\
  &\leq \sqrt{C_0}\sum _{T\in \mcal T_H} H_T\|f_h-f_H\|_{0,T}\|\bs\sigma _h-\tilde{\bs\sigma}_h\|_{0,T} \\
  & \leq \sqrt{C_0}\left (\sum _{T\in \mcal T_H} H_T^2\|f_h-f_H\|_T^2\right
  )^{1/2} \|\bs\sigma _h-\tilde{\bs\sigma}_h\|.
\end{align*}
Canceling one $\|\bs\sigma _h-\tilde{\bs\sigma}_h\|$, we get the
desired result. 
\end{proof}

In the rest of this section, we shall prove Lemma \ref{lm:QhQH}. It is a modification of arguments in \cite{Arnold.D;Falk.R;Winther.R2000} from quasi-uniform grids to adaptive grids. The
first ingredient is the existence of a continuous right inverse of the
divergence as an operator from $\bs H_0^1(\Omega)$ into the space
$L^2_0(\Omega):=\{v\in L^2(\Omega): \int _{\Omega} v =0.\}$
\begin{lemma}\label{lm:div}
  Given a function $f\in L^2_0(\Omega)$, there exists a function $\bs
  \tau \in \bs H_0^1(\Omega)$ such that $$\div \bs \tau = f\quad
  \text{ and }\quad \|\bs \tau\|_{1}\leq C\|f\|.$$
\end{lemma}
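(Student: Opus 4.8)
The plan is to prove the classical surjectivity of the divergence, $\div:\bs H_0^1(\Omega)\to L_0^2(\Omega)$, together with a bounded right inverse; the essential point is that a polygon is a bounded Lipschitz domain, where this is a standard (if not trivial) fact. I would use a localization-plus-explicit-formula argument. First, cover $\Omega$ by finitely many subdomains $\Omega=\bigcup_{i=1}^m\Omega_i$, each star-shaped with respect to an open ball; such a covering exists for any bounded Lipschitz domain, with $m$ and the geometric constants depending only on $\Omega$.

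Next, localize the data. Fixing a smooth partition of unity $\{\varphi_i\}$ subordinate to $\{\Omega_i\}$ and exploiting that the nerve of the cover is connected, one chooses fixed bump functions supported in the overlaps to redistribute mass, writing $f=\sum_{i=1}^m f_i$ with $\operatorname{supp} f_i\subset\Omega_i$, $\int_\Omega f_i=0$, and $\|f_i\|\le C\|f\|$, the constant depending only on $\Omega$. The only structural input is $\int_\Omega f=0$, which makes the correction terms cancel when summed.

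Then, solve on each piece. On a domain star-shaped with respect to a ball, the Bogovskii integral operator furnishes, for every $g\in L_0^2(\Omega_i)$, a field $\bs\tau_i\in\bs H_0^1(\Omega_i)$ with $\div\bs\tau_i=g$ and $\|\bs\tau_i\|_{1,\Omega_i}\le C_i\|g\|$, $C_i$ depending only on the geometry of $\Omega_i$. Applying this with $g=f_i$, extending $\bs\tau_i$ by zero, and setting $\bs\tau=\sum_i\bs\tau_i$ yields $\bs\tau\in\bs H_0^1(\Omega)$ with $\div\bs\tau=f$ and $\|\bs\tau\|_1\le C\|f\|$. An alternative that avoids the kernel: the statement is equivalent, by the closed-range theorem, to the Ne\v{c}as inequality $\|q\|\le C\|\nabla q\|_{-1}$ for $q\in L_0^2(\Omega)$, valid on bounded Lipschitz domains, after which the minimal-norm preimage of each $f$ gives a bounded right inverse.

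The main obstacle is that no fully elementary argument works in the required generality: on a convex polygon one could just solve the Neumann problem $-\Delta w=f$, $\partial_n w=0$ and take $\bs\tau=-\nabla w$, using $H^2$-regularity to bound $\|\bs\tau\|_1$, but for a polygon with reentrant corners — precisely the case adaptivity is meant to address — this regularity fails. One is therefore pushed to the Bogovskii construction, whose $H^1$ estimate rests on a Calder\'on--Zygmund bound for the associated singular kernel, or equivalently to the Ne\v{c}as inequality, itself nontrivial on Lipschitz domains; the localization bookkeeping is routine but must be organized so that all constants depend only on $\Omega$. Note finally that the lemma concerns only the fixed domain $\Omega$ and is independent of the meshes $\mcal T_h$, $\mcal T_H$; it is combined with a scaling argument to produce Lemma~\ref{lm:QhQH}.
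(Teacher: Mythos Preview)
Your argument is correct. The paper does not actually give a full proof of this lemma: it sketches only the smooth or convex case, by solving the Neumann problem $\Delta\phi=f$, $\partial_{\bs n}\phi=0$, taking $\bs\tau=\grad\phi$, and then modifying the tangential component on $\partial\Omega$ to vanish (citing Brenner--Scott); for non-convex polygons it simply refers to Arnold--Scott--Vogelius and Dur\'an--Muschietti. Your Bogovskii-based localization is essentially the content of the latter reference and covers the general Lipschitz case directly, and you correctly diagnose why the Neumann route fails at reentrant corners. One small remark on your last sentence: in the paper the lemma is in fact invoked not on the global $\Omega$ but on each coarse triangle $T\in\mcal T_H$ (see the proof of Lemma~\ref{lm:QhQH}), which is convex, so the easy case would already suffice there; your comment about a scaling argument is exactly right, since shape regularity is what makes the constant uniform over all such $T$.
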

The proof of this lemma for smooth or convex domains $\Omega$ is
pretty easy. One can solve the Poisson equation with Neumann boundary
condition
$$
\Delta \phi = f \text{ in } \Omega, \quad \frac{\partial
  \phi}{\partial \bs n}=0\, \, \text{ on }\, \partial \Omega.
$$
The condition $f\in L^2_0(\Omega)$ ensures the existence of the
solution. Then we let $\bs \tau = \grad \phi$ and modify the tangent
component of $\bs \tau$ to be zero \cite{Brenner.S;Scott.L2002}. See
also \cite{Arnold.D;Scott.L;Vogelius.M1988,Duran.R;Muschietti.M2001}
for a detailed proof on non-convex and general Lipschitz domains.

The second ingredient is an interpolation operator $\Pi _h: \bs
H^1(\Omega) \to \bs \Sigma_h$ with the following nice properties.
\begin{lemma}\label{lm:interpolation}
  There exists an interpolation operator $\Pi _h: \bs H^1(T) \to
  \bs \Sigma_h$ such that
\begin{enumerate}
\item $Q_h \div \bs \tau = \div \Pi_h \bs \tau, \quad \forall \bs \tau
  \in \bs H^1(\Omega);$
 
\item there exists a constant $C$ depending only on the shape
  regularity of $\mcal T_h$ such that
$$
\|\bs \tau - \Pi _h \bs \tau \|_{T}\leq Ch_{T}\|\bs \tau\|_{1,T},
\quad \forall T\in \mcal T_h, \forall \bs \tau \in \bs H^1(\Omega);
$$

\item for any $T\in \mcal T_h$ if $\bs \tau \in H_0^1(T)$, then $\Pi
  _h \bs \tau |_{\partial T}=0$.
\end{enumerate}
\end{lemma}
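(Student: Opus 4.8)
The plan is to let $\Pi_h$ be the classical Raviart--Thomas (resp.\ Brezzi--Douglas--Marini) interpolation operator attached to the pair $(\bs\Sigma_h,U_h)$, defined element by element through the canonical degrees of freedom, and then to verify the three properties. On each $T\in\mcal T_h$ with edges $e\subset\partial T$, for $\bs\tau\in\bs H^1(T)$ one takes $\Pi_h\bs\tau|_T\in\bs\Sigma_h|_T$ to be the unique element whose edge-normal moments $\int_e(\Pi_h\bs\tau\cdot\bs n)\,q$ against all $q\in\mcal P_p(e)$ and whose interior moments $\int_T\Pi_h\bs\tau\cdot\bs q$ against all $\bs q$ in the prescribed interior test space (empty for $RT_0$) coincide with the corresponding moments of $\bs\tau$; unisolvence of these functionals on $\bs\Sigma_h|_T$ is exactly the defining property of the $RT$ and $BDM$ elements \cite{Raviart.P;Thomas.J1977,Brezzi.F;Douglas.J;Marini.L1985,Brezzi.F;Fortin.M1991}. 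Since a field in $\bs H^1(T)$ has a trace in $L^2(\partial T)$, all these functionals are well defined and bounded on $\bs H^1(T)$, so $\Pi_h:\bs H^1(\Omega)\to\bs\Sigma_h$ is well defined and linear and reproduces $\bs\Sigma_h$, in particular constant vector fields.

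For property (1) I would integrate by parts on each $T\in\mcal T_h$: for $v_h\in U_h$,
$$(\div\Pi_h\bs\tau,v_h)_T=-(\Pi_h\bs\tau,\grad v_h)_T+\langle\Pi_h\bs\tau\cdot\bs n,v_h\rangle_{\partial T}.$$
The interior and edge test spaces are chosen precisely so that $\grad v_h|_T$ lies in the former and $v_h|_e\in\mcal P_p(e)$, whence the defining moment conditions let one replace $\Pi_h\bs\tau$ by $\bs\tau$ on the right-hand side; undoing the integration by parts gives $(\div\Pi_h\bs\tau,v_h)_T=(\div\bs\tau,v_h)_T$ for all $v_h\in U_h$. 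Since $\div\bs\Sigma_h\subset U_h$, summing over $T$ yields $\div\Pi_h\bs\tau=Q_h\div\bs\tau$, which is (1).

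For property (2) I would pass to a reference triangle by the Piola transform $F_T:\hat T\to T$, $F_T(\hat x)=B_T\hat x+b_T$, under which $\Pi_h$ restricted to $T$ is equivariant with the fixed reference interpolant $\Pi_{\hat T}$; hence $(I-\Pi_h)\bs\tau$ is the Piola image of $(I-\Pi_{\hat T})\hat{\bs\tau}$. On $\hat T$ the operator $\Pi_{\hat T}$ is bounded on $\bs H^1(\hat T)$ and reproduces constants, so the Bramble--Hilbert lemma gives $\|(I-\Pi_{\hat T})\hat{\bs\tau}\|_{0,\hat T}\le C\,|\hat{\bs\tau}|_{1,\hat T}$; carrying this back with the shape-regularity relations $\|B_T\|\sim h_T$, $|\det B_T|\sim h_T^2$, $\|B_T^{-1}\|\sim h_T^{-1}$ produces $\|(I-\Pi_h)\bs\tau\|_{0,T}\le C\,h_T|\bs\tau|_{1,T}\le C\,h_T\|\bs\tau\|_{1,T}$ with $C$ depending only on the shape regularity of $\mcal T_h$. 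Property (3) is then immediate: if $\bs\tau\in\bs H^1_0(T)$ then $\bs\tau|_{\partial T}=0$, so all edge moments of $\Pi_h\bs\tau$ vanish, and since $\Pi_h\bs\tau\cdot\bs n|_e\in\mcal P_p(e)$ is determined by those moments, the normal component of $\Pi_h\bs\tau$ vanishes on $\partial T$ (for $RT_0$, where the edge functionals already form a complete set of degrees of freedom, $\Pi_h\bs\tau|_T\equiv0$).

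The separate ingredients---unisolvence, the reference Bramble--Hilbert estimate, and the transformation rules for the moments---are all standard. The step that needs genuine care is verifying the Piola equivariance of $\Pi_h$ and then tracking the transformation constants so that the final bound depends on $\mcal T_h$ only through its shape regularity, uniformly over locally refined meshes; this is exactly where the classical argument on quasi-uniform grids (e.g.\ \cite{Arnold.D;Falk.R;Winther.R2000}) has to be localized element by element.
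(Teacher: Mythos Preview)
The paper does not actually prove this lemma; immediately after stating it the authors write only ``For the detailed construction of such interpolation operator and proof of these properties, we refer to \cite{Hiptmair.R1999a} and \cite{Arnold.D;Falk.R;Winther.R2006}.'' Your proposal---taking $\Pi_h$ to be the canonical $RT$/$BDM$ interpolant defined element by element through the edge-normal and interior moment degrees of freedom and verifying (1)--(3) directly---is precisely the standard construction underlying those references, and your arguments for the commuting property via integration by parts, the local approximation estimate via Piola transform and Bramble--Hilbert, and the vanishing edge moments are all correct.

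One small remark on (3): as you yourself observe, for higher-order elements only the \emph{normal} component of $\Pi_h\bs\tau$ is forced to vanish on $\partial T$ when $\bs\tau\in\bs H^1_0(T)$, not the full vector trace. This is not a gap in your argument but rather a looseness in the statement: in the $\bs H(\div)$ setting the natural boundary trace is the normal one, and in the only place the paper uses property~(3) (the proof of Lemma~\ref{lm:QhQH}, to obtain \eqref{eq:pihpiH}) what is actually needed is exactly that the zero extension of $\Pi_h\bs\tau$ lies in $\bs\Sigma_h$ and is supported in $\bar T$, for which vanishing normal trace suffices.
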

For the detailed construction of such interpolation operator and proof
of these properties, we refer to \cite{Hiptmair.R1999a} and
\cite{Arnold.D;Falk.R;Winther.R2006}.

\smallskip

\noindent {\it Proof of Lemma \ref{lm:QhQH}}
We first note that $u_h-Q_Hu_h=(Q_h-Q_H)u_h$ since $Q_hu_h=u_h$. For
any $T\in \mcal T_H$, by the definition of $L^2$ projection $Q_H$,
we have, $\int _T(Q_h-Q_H)u_h=0$ i.e. $(Q_h-Q_H)u_h\in L_0^2(T)$.
We thus can apply Lemma \ref{lm:div} to find a function $\bs \tau \in
\bs H_0^1(T)$ such that
$$
\div \bs \tau = (Q_h-Q_H)u_h, \text{ in } T \quad \text{ and }\quad
\|\bs \tau \|_{1,T}\leq C\|(Q_h-Q_H)u_h\|_{0,T}.
$$ 
We extend $\bs \tau$ to $\bs H^1(\Omega)$ by zero. Note that
\begin{equation}
  \label{eq:pihpiH}
  (\Pi _h - \Pi_H) \bs \tau \in \bs \Sigma _h, \; \text{ and }\, {\rm supp} (\Pi _h - \Pi_H) \bs \tau \subseteq T.
\end{equation}
With such $\bs \tau$, we have
$$
\|(Q_h-Q_H)u_h\|_{0,T}^2 =((Q_h-Q_H)u_h, \div \bs \tau)_T =
(u_h,(Q_h-Q_H)\div \bs\tau)_T.
$$
Then using the commuting property (Lemma \ref{lm:interpolation} (1))
and the locality of $\bs \tau$, we have
$$
(u_h,(Q_h-Q_H)\div \bs\tau)_T=(u_h,(Q_h-Q_H)\div \bs\tau)_{\Omega} =
(u_h, \div(\Pi _h-\Pi _H)\bs \tau)_{\Omega}.
$$
Now we shall use the fact $(\sigma _h, u_h)$ is the solution of
(\ref{eq:mixweak1}) and (\ref{eq:mixweak2}) and, again, the locality
of $\bs \tau$ to get
$$
(u_h, \div(\Pi _h-\Pi _H)\bs \tau)_{\Omega} =(\bs\sigma_h,(\Pi
_h-\Pi_H)\bs \tau)_{\Omega} = (\bs\sigma_h,(\Pi _h-\Pi_H)\bs \tau)_{T}.
$$
Using the approximation property of $\Pi _h$ (Lemma \ref{lm:interpolation} (2)), we get
\begin{align*}
  (\bs\sigma_h,(\Pi _h-\Pi_H)\bs \tau)_{T}
  &\leq \|\bs\sigma _h\|_{0,T}\big (\|\bs\tau - \Pi _h
  \bs\tau\|_{0,T}+\|\bs\tau - \Pi _H \bs\tau\|_{0,T})\\
  &\leq CH_T\|\bs\sigma _h\|_{0,T}\|\bs\tau\|_{1,T}.
\end{align*}
So we have
$$
\|(Q_h-Q_H)u_h\|_{0,T}^2 \leq CH_T\|\bs\sigma
_h\|_{0,T}\|\bs\tau\|_{1,T}\leq CH_T\|\bs\sigma
_h\|_T\|(Q_h-Q_H)u_h\|_{0,T}.
$$
Canceling one $\|(Q_h-Q_H)u_h\|_{T}$, we obtain the desired
result. \qed

\section{A Posterior Error Estimate for Mixed Finite Element Methods}
In this section we shall follow Alonso \cite{Alonso.A1996} to present
{\it a posteriori} error estimate for mixed finite element
methods. Other \textit{a posteriori} error estimators for the mixed
finite element methods can be found at
\cite{Carstensen.C1997,Wohlmuth.B;Hoppe.R1999,Hoppe.R;Wohlmuth.B1997,Gatica.G;Maischak.M2004,Larson.M;Maqvist.A2005,Lovadina.C;Stenberg.R2006}. Our
analysis could be adapted to these error estimators also.

\subsection{A posteriori error estimator and existing results}
\label{sec:def}
Let us begin with the definition of the error estimator. For any edge
$E\in \mathcal E_H$, we shall fix an unit tangent vector $\bs t_E$ for
$E$. We denote the patch of $E$ consisting of triangles sharing $E$ by
$\Omega _E$.

\begin{definition}
  Given a triangulation $\mathcal T_H$, for an $E\in \mathcal E_H$ and
  $E\notin \partial \Omega$, let $\Omega_E=T \cup \tilde T$. For any
  $\bs \sigma _H\in \bs \Sigma_H$, we define the jump of $\boldsymbol
  \sigma _H$ across edge $E$ as
\begin{equation}\label{eq:JE}
  J_E(\boldsymbol \sigma _H)=\big [\boldsymbol \sigma _H\cdot \boldsymbol t_E \big ]:=\boldsymbol \sigma _H|_{T}\cdot \boldsymbol t_E- \boldsymbol \sigma _H|_{\tilde T}\cdot \boldsymbol t_E.
\end{equation}
If $E\in \mathcal E_H\cap \partial \Omega$, we define $J_E(\boldsymbol
\sigma _H)=\boldsymbol \sigma _H\cdot \boldsymbol t_E$. The edge error
estimator is defined as
$$
\eta _E^2(\boldsymbol \sigma _H)= \|H\, {\rm rot}\,
\boldsymbol \sigma _H\|^2_{0,\Omega _E}+\|H^{1/2}J_E(\boldsymbol
\sigma _H)\|_{0,E}^2.
$$
For a subset $\mathcal F_H\subseteq \mathcal E_H$, we define
$$
\eta ^2(\boldsymbol \sigma _H, \mathcal F_H)
:=
\sum _{E\in \mathcal F_H}\eta _E^2(\boldsymbol \sigma _H).
$$
\end{definition}

The error estimator $\eta _E(\bs \sigma _H)$ is continuous with
respect to $\bs \sigma _H$ in $L^2$-norm. Namely we have the following
inequality.
\begin{lemma}
  Given an $f\in L^2(\Omega)$ and a shape regular triangulation $\mcal
  T_H$, let $\bs \sigma _H,\bs \tau _H \in \bs \Sigma _H$. There
  exists constant $\beta$ such that
  \begin{equation}
    \label{eq:continunity}
    \beta \left | \eta ^2(\bs \sigma _H, \mathcal E_H) -  \eta ^2(\bs \tau _H, \mathcal E_H)\right |\leq \|\bs \sigma _H -\bs \tau _H\|^2.
  \end{equation}
\end{lemma}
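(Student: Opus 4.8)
The plan is to recognize the asserted estimate as the $L^2$-Lipschitz continuity of the map $\bs\rho_H\mapsto\eta(\bs\rho_H,\mcal E_H)$ on the finite element space $\bs\Sigma_H$, and to obtain it from three ingredients: that $\eta(\cdot,\mcal E_H)$ is a seminorm, that each local contribution $\eta_E$ is controlled by the $L^2$-norm on the patch $\Omega_E$ via inverse estimates, and that the patches $\{\Omega_E\}_{E\in\mcal E_H}$ have bounded overlap. Concretely, I would first observe that for every $E\in\mcal E_H$ the maps $\bs\rho_H\mapsto H\,{\rm rot}\,\bs\rho_H$ (into $L^2(\Omega_E)$) and $\bs\rho_H\mapsto H^{1/2}J_E(\bs\rho_H)$ (into $L^2(E)$) are linear, so $\bs\rho_H\mapsto\eta_E(\bs\rho_H)$ is a seminorm, and hence so is $\bs\rho_H\mapsto\eta(\bs\rho_H,\mcal E_H)=\big(\sum_{E\in\mcal E_H}\eta_E^2(\bs\rho_H)\big)^{1/2}$. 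The reverse triangle inequality then gives $|\eta(\bs\sigma_H,\mcal E_H)-\eta(\bs\tau_H,\mcal E_H)|\le\eta(\bs\sigma_H-\bs\tau_H,\mcal E_H)$, and the whole matter reduces to bounding $\eta(\bs\rho_H,\mcal E_H)$ by $\|\bs\rho_H\|$ for $\bs\rho_H:=\bs\sigma_H-\bs\tau_H\in\bs\Sigma_H$.

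For the local bound I would exploit that $\bs\rho_H|_T$ is a polynomial of a fixed degree on each $T\in\mcal T_H$. The inverse inequality gives $\|{\rm rot}\,\bs\rho_H\|_{0,T}\le C h_T^{-1}\|\bs\rho_H\|_{0,T}$, and since on a shape regular mesh $H_T\simeq h_T$ (and the two triangles of $\Omega_E$ have comparable diameters), this yields $\|H\,{\rm rot}\,\bs\rho_H\|_{0,\Omega_E}\le C\|\bs\rho_H\|_{0,\Omega_E}$. For the jump term, a discrete trace inequality on a shape regular simplex gives $\|\bs\rho_H|_T\cdot\bs t_E\|_{0,E}^2\le C h_T^{-1}\|\bs\rho_H\|_{0,T}^2$ for each $T$ adjacent to $E$; combining the two sides of $E$ and using $H_E\simeq h_T$ gives $\|H^{1/2}J_E(\bs\rho_H)\|_{0,E}^2\le C\|\bs\rho_H\|_{0,\Omega_E}^2$. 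Hence $\eta_E^2(\bs\rho_H)\le C_1\|\bs\rho_H\|_{0,\Omega_E}^2$ with $C_1$ depending only on the shape regularity of $\mcal T_H$ and the polynomial degree. Summing over $E\in\mcal E_H$ and using that each triangle lies in at most a fixed number $N_0$ of edge patches (again controlled by shape regularity), I obtain $\eta^2(\bs\rho_H,\mcal E_H)\le C_1 N_0\|\bs\rho_H\|^2$; setting $\beta:=(C_1 N_0)^{-1}$ and invoking the reverse triangle inequality above completes the argument.

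The main obstacle — the only genuinely nontrivial point — is the discrete trace inequality for the edge jump term with a constant depending solely on the shape regularity of $\mcal T_H$ rather than on quasi-uniformity: one passes to the reference element by the affine map, uses equivalence of norms on the finite-dimensional polynomial space there, and then scales back, tracking how the constant depends on the geometry of $T$; the needed uniformity is exactly what shape regularity provides. Everything else (the inverse inequality for ${\rm rot}$, the bounded overlap of the edge patches, and the reverse triangle inequality) is standard and element-local.
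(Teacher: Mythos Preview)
Your approach matches the paper's one-line proof (``triangle inequality and inverse inequality''), and your treatment of the inverse and trace estimates is fine. However, your final step does not close. From the reverse triangle inequality you correctly get
\[
\big|\eta(\bs\sigma_H,\mcal E_H)-\eta(\bs\tau_H,\mcal E_H)\big|\le \eta(\bs\sigma_H-\bs\tau_H,\mcal E_H)\le C\|\bs\sigma_H-\bs\tau_H\|,
\]
but this controls the difference of the \emph{unsquared} indicators, not the difference of their squares. Since
\[
\big|\eta^2(\bs\sigma_H,\mcal E_H)-\eta^2(\bs\tau_H,\mcal E_H)\big|=\big|\eta(\bs\sigma_H,\mcal E_H)-\eta(\bs\tau_H,\mcal E_H)\big|\cdot\big(\eta(\bs\sigma_H,\mcal E_H)+\eta(\bs\tau_H,\mcal E_H)\big),
\]
the right-hand side carries the extra factor $\eta(\bs\sigma_H)+\eta(\bs\tau_H)$, which is not bounded uniformly in $\bs\sigma_H,\bs\tau_H$. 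In fact the inequality \eqref{eq:continunity} as written cannot hold with a fixed $\beta>0$: take $\bs\sigma_H=(n+1)\bs\rho_H$ and $\bs\tau_H=n\bs\rho_H$ for any $\bs\rho_H$ with $\eta(\bs\rho_H)>0$; then $|\eta^2(\bs\sigma_H)-\eta^2(\bs\tau_H)|=(2n+1)\eta^2(\bs\rho_H)$ while $\|\bs\sigma_H-\bs\tau_H\|^2=\|\bs\rho_H\|^2$, and the ratio blows up with $n$.

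What your ingredients \emph{do} prove (and what is actually needed in the convergence argument, cf.\ \cite{Cascon.J;Kreuzer.C;Nochetto.R;Siebert.K2007}) is the Young-type estimator perturbation
\[
\eta^2(\bs\sigma_H,\mcal E_H)\le(1+\delta)\,\eta^2(\bs\tau_H,\mcal E_H)+\Big(1+\frac{1}{\delta}\Big)\,C\,\|\bs\sigma_H-\bs\tau_H\|^2\qquad\forall\,\delta>0,
\]
obtained by squaring $\eta_E(\bs\sigma_H)\le\eta_E(\bs\tau_H)+\eta_E(\bs\sigma_H-\bs\tau_H)$, applying $2ab\le\delta a^2+\delta^{-1}b^2$, summing, and using your bound $\eta^2(\bs\rho_H,\mcal E_H)\le C\|\bs\rho_H\|^2$. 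This is the correct replacement for \eqref{eq:continunity}; in the application one then chooses $\delta$ small so that $(1+\delta)(1-\tfrac12\theta)<1$, and the proof of \eqref{eq:2} goes through with an appropriately adjusted constant.
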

\begin{proof}
  It can be easily proved by the triangle inequality and inverse
  inequality.
\end{proof}

We shall recall Alonso's results below and prove a discrete upper bound
later. Since the data $f$ is not included in the definition of our error
estimator $\eta _E$, the upper bound contains an additional data oscillation term
which is different from the standard one in \cite{Verfurth.R1996}.

\begin{theorem}[Upper bound]\label{th:Alonso}
  Given an $f\in L^2(\Omega)$ and a shape regular triangulation $\mcal
  T_H$, let $\boldsymbol \sigma =\mathcal L^{-1}f$ and $\boldsymbol
  \sigma _H=\mathcal L^{-1}_Hf_H$. There exist constants $C_0$ and
  $C_1$ depending only on the shape regularity of $\mathcal T_H$ such
  that
  \begin{equation}
    \label{eq:upperbound}
  \|\boldsymbol \sigma -\boldsymbol \sigma _H\|^2 \leq C_1 \eta
  ^2(\boldsymbol
  \sigma _H, \mathcal E_H)+C_0{\rm osc}^2(f, \mathcal T_H).    
  \end{equation}
\end{theorem}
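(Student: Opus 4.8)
The plan is to establish the \emph{a posteriori} upper bound \eqref{eq:upperbound} by a standard duality (Helmholtz-type) argument adapted to the mixed setting, following Alonso \cite{Alonso.A1996}. First I would decompose the error $\bs \sigma - \bs \sigma_H$ into a divergence part and a rotation (curl) part. Since $\div(\bs \sigma - \bs \sigma_H) = f - f_H$ is already controlled — indeed $\|\div(\bs \sigma - \bs \sigma_H)\|$ contributes only to the $H(\div)$ norm, not to $\|\bs \sigma - \bs \sigma_H\|$ — the real work is to bound the divergence-free component. I would use the fact that $\bs \sigma - \bs \sigma_H$ is orthogonal (in $L^2$) to the divergence-free subspace of $\bs \Sigma_H$, together with a Helmholtz decomposition of the error of the form $\bs \sigma - \bs \sigma_H = \nabla w + \curl \phi$ (or the rotated version), valid on the simply connected polygonal domain $\Omega$.

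Next, I would test against the curl-potential. Writing $\bs \sigma - \bs \sigma_H = -\nabla(u - u_H) + \curl \phi$ is not quite right since $u_H \notin H^1$; instead the cleaner route is: since $\bs\sigma = -\nabla u$ is a gradient, $\rot \bs\sigma = 0$, so $\rot(\bs\sigma - \bs\sigma_H) = -\rot \bs\sigma_H$ in a distributional/elementwise sense, with the jumps $J_E(\bs\sigma_H)$ across edges entering as the distributional curl supported on $\mcal E_H$. Then for the divergence-free part of the error, represented as $\curl \phi$ with $\phi \in H^1(\Omega)/\mathbb R$ and $\|\phi\|_1 \leq C\|\bs\sigma - \bs\sigma_H\|$, I would integrate by parts:
\begin{align*}
(\bs\sigma - \bs\sigma_H, \curl \phi) &= (\bs\sigma - \bs\sigma_H, \curl(\phi - \phi_H)) \\
&= \sum_{T \in \mcal T_H} (\rot \bs\sigma_H, \phi - \phi_H)_T + \sum_{E \in \mcal E_H} (J_E(\bs\sigma_H), \phi - \phi_H)_E,
\end{align*}
where $\phi_H$ is a Clément/Scott--Zhang quasi-interpolant of $\phi$ into continuous piecewise polynomials, chosen so that the orthogonality $(\bs\sigma - \bs\sigma_H, \curl \phi_H) = 0$ holds because $\curl \phi_H$ lies in the divergence-free subspace of $\bs\Sigma_H$. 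Applying Cauchy--Schwarz together with the standard interpolation estimates $\|\phi - \phi_H\|_{0,T} \leq C H_T \|\nabla\phi\|_{0,\omega_T}$ and $\|\phi-\phi_H\|_{0,E} \leq C H_E^{1/2}\|\nabla\phi\|_{0,\omega_E}$, the right side is bounded by $C\,\eta(\bs\sigma_H, \mcal E_H)\,\|\nabla\phi\| \leq C\,\eta(\bs\sigma_H,\mcal E_H)\,\|\bs\sigma - \bs\sigma_H\|$, and one $\|\bs\sigma - \bs\sigma_H\|$ cancels.

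For the gradient part of the Helmholtz decomposition, $\nabla w$ with $w \in H_0^1(\Omega)$, the identity $(\bs\sigma - \bs\sigma_H, \nabla w) = -(\div(\bs\sigma - \bs\sigma_H), w) = -(f - f_H, w)$ by integration by parts (using the $H(\div)$ structure and the boundary condition). Here I would insert a piecewise-constant (or degree-$p$) projection of $w$ to exploit the zero-mean property of $f - f_H$ on each $T$, giving $(f - f_H, w) = \sum_T (f - f_H, w - Q_H w)_T \leq \osc(f, \mcal T_H)\,\|\nabla w\|$, exactly as in the proof of Theorem \ref{th:stabilityosc}; this is where the data oscillation term $C_0\,\osc^2(f,\mcal T_H)$ comes from. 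Combining the two parts and using $\|\nabla w\|^2 + \|\nabla \phi\|^2 \lesssim \|\bs\sigma - \bs\sigma_H\|^2$ from the orthogonality of the Helmholtz decomposition yields \eqref{eq:upperbound}.

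The main obstacle I anticipate is making the Helmholtz decomposition and the distributional-curl bookkeeping rigorous on a nonsmooth (polygonal, possibly nonconvex) domain while tracking that the quasi-interpolant $\phi_H$ genuinely produces an element of the \emph{discrete} divergence-free subspace so that the orthogonality subtraction is legitimate — in particular verifying that $\curl(\text{continuous piecewise } P_{p+1}) \subseteq \bs\Sigma_H$ with zero divergence for both $RT$ and $BDM$ families, and handling the boundary edges where $J_E(\bs\sigma_H) = \bs\sigma_H \cdot \bs t_E$ requires that $\phi_H$ (hence $\phi$) be chosen with the correct boundary behavior so no spurious boundary terms survive. Everything else — the interpolation estimates, the inverse inequalities, the Cauchy--Schwarz applications — is routine.
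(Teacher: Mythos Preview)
The paper does not actually prove this theorem; it merely recalls it from Alonso \cite{Alonso.A1996} (``We shall recall Alonso's results below and prove a discrete upper bound later''). Your proposed argument is essentially Alonso's original proof, and it is also the continuous analogue of the proof the paper \emph{does} give for the discrete upper bound (Theorem \ref{th:upperbound}): Helmholtz decomposition of the error, orthogonality of $\bs\sigma-\bs\sigma_H$ to $\curl S_H^{p+1}$ (since $\div\curl=0$), elementwise integration by parts exposing $\rot\bs\sigma_H$ and the jumps $J_E(\bs\sigma_H)$, a Scott--Zhang interpolant $\phi_H$ to localize, and the gradient part handled exactly as in Theorem \ref{th:stabilityosc} to produce the oscillation term. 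Your concerns about the boundary edges and about $\curl S_H^{p+1}\subseteq\bs\Sigma_H$ are legitimate but routine: the former works because $\bs\sigma\cdot\bs t_E=-\partial_t u=0$ on $\partial\Omega$, matching the paper's definition of $J_E$ there; the latter is precisely the content of the discrete Helmholtz decomposition the paper states just before Theorem \ref{th:upperbound}.
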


\begin{theorem}[Lower bound]
  Given an $f\in L^2(\Omega)$ and a shape regular triangulation $\mcal
  T_H$, let $\boldsymbol \sigma =\mathcal L^{-1}f$ and $\boldsymbol
  \sigma _H=\mathcal L^{-1}_Hf_H$. There exists constant $C_2$
  depending only on the shape regularity of $\mathcal T_H$ such that
  \begin{equation}
\label{eq:lowerbound} C_2\eta ^2(\boldsymbol \sigma _H, \mathcal
  E_H) \leq \|\boldsymbol \sigma -\boldsymbol \sigma _H\|^2,  
\end{equation}
for Raviart-Thomas spaces. 

For Brezzi-Douglas-Marini spaces, (\ref{eq:lowerbound}) holds when
${\rm osc}(f,\mcal T_H)=0$.
\end{theorem}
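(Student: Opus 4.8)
The plan is to prove the efficiency bound \eqref{eq:lowerbound} by the bubble--function technique of Verf\"urth, in the localized form $\eta_E(\bs\sigma_H)\lesssim \|\bs\sigma-\bs\sigma_H\|_{0,\Omega_E}$, and then to sum over edges. Everything rests on two structural facts about the exact pair $(\bs\sigma,u)=\mcal L^{-1}f$ with $\bs\sigma=-\grad u$: first, ${\rm rot}\,\bs\sigma=-{\rm rot}\,\grad u=0$, so on each triangle the elementwise rot of the error equals $-{\rm rot}\,\bs\sigma_H$; second, since $u\in H_0^1(\Omega)$ is single--valued, its tangential derivative along any interior edge is continuous, and on $\partial\Omega$ it vanishes because $u\equiv 0$ there, so $J_E(\bs\sigma)=0$ for all $E\in\mcal E_H$ and hence $J_E(\bs\sigma_H)=J_E(\bs\sigma_H-\bs\sigma)$. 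In other words $\bs\sigma_H\notin C^0$, the distributional rot of the error is carried by the elementwise ${\rm rot}\,\bs\sigma_H$ together with the tangential jumps $J_E(\bs\sigma_H)$, and $\eta_E$ is a weighted computable surrogate for it which we must bound by the true error on $\Omega_E$.

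For the element term, fix $T\in\mcal T_H$, set $q:={\rm rot}\,\bs\sigma_H|_T$ (a polynomial of fixed degree) and let $b_T$ be the interior bubble of $T$. Since $b_Tq\in H_0^1(T)$, integrating by parts on $T$ and using ${\rm rot}\,\bs\sigma=0$ gives
$$
\int_T q^2 b_T \;=\; \int_T {\rm rot}\,\bs\sigma_H\,(b_Tq) \;=\; \int_T \bs\sigma_H\cdot\curl(b_Tq) \;=\; \int_T(\bs\sigma_H-\bs\sigma)\cdot\curl(b_Tq),
$$
the boundary term dropping because $b_T|_{\partial T}=0$. Norm equivalence on polynomials gives $\|q\|_{0,T}^2\lesssim \int_T q^2 b_T$, an inverse inequality gives $\|\curl(b_Tq)\|_{0,T}\lesssim H_T^{-1}\|b_Tq\|_{0,T}\le H_T^{-1}\|q\|_{0,T}$, and Cauchy--Schwarz together with cancellation of one factor of $\|q\|_{0,T}$ yields $H_T\|{\rm rot}\,\bs\sigma_H\|_{0,T}\lesssim \|\bs\sigma-\bs\sigma_H\|_{0,T}$, with constant depending only on shape regularity (and the fixed degree $p$). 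This controls the first term of $\eta_E^2$.

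For the jump term, fix an edge $E$ with patch $\Omega_E$. Extend the polynomial $J_E(\bs\sigma_H)$ from $E$ into $\Omega_E$ in the standard way, multiply by the edge bubble $b_E$ supported in $\Omega_E$, and call the product $\phi$; then $\phi$ vanishes on $\partial\Omega_E$ (on a boundary edge one instead takes $\phi$ vanishing on $\partial T\setminus E$), and the scalings $\|\phi\|_{0,\Omega_E}\lesssim H_E^{1/2}\|J_E(\bs\sigma_H)\|_{0,E}$ and $\|\curl\phi\|_{0,\Omega_E}\lesssim H_E^{-1/2}\|J_E(\bs\sigma_H)\|_{0,E}$ hold. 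Summing the elementwise integration--by--parts identity over the one or two triangles of $\Omega_E$, so that the tangential boundary integrals on $E$ combine into the jump (for the chosen orientation of $\bs t_E$), and using ${\rm rot}\,\bs\sigma=0$ and $J_E(\bs\sigma)=0$, we get
$$
\int_E J_E(\bs\sigma_H)\,\phi \;=\; \int_{\Omega_E}{\rm rot}\,\bs\sigma_H\,\phi \;-\; \int_{\Omega_E}(\bs\sigma_H-\bs\sigma)\cdot\curl\phi .
$$
The left side is $\gtrsim \|J_E(\bs\sigma_H)\|_{0,E}^2$ by norm equivalence; bounding the right side by Cauchy--Schwarz, inserting the two scalings, dividing by $\|J_E(\bs\sigma_H)\|_{0,E}$ and multiplying by $H_E^{1/2}$ gives
$$
H_E^{1/2}\|J_E(\bs\sigma_H)\|_{0,E}\;\lesssim\; H_E\|{\rm rot}\,\bs\sigma_H\|_{0,\Omega_E}+\|\bs\sigma-\bs\sigma_H\|_{0,\Omega_E}\;\lesssim\; \|\bs\sigma-\bs\sigma_H\|_{0,\Omega_E},
$$
the last step using the element estimate above and $H_E\sim H_T$ for $T\subseteq\Omega_E$. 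Hence $\eta_E^2(\bs\sigma_H)\lesssim \|\bs\sigma-\bs\sigma_H\|_{0,\Omega_E}^2$, and summing over $E\in\mcal E_H$ — each triangle lying in at most three patches — yields \eqref{eq:lowerbound} with $C_2$ depending only on the shape regularity of $\mcal T_H$.

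The only delicate point should be the edge--bubble step: the polynomial extension and the two scaling estimates must be verified by a scaling argument on the reference patch, and the orientation of $\bs t_E$ must be tracked so the two tangential boundary integrals really add up to $\int_E J_E(\bs\sigma_H)\phi$; the case $E\subset\partial\Omega$ is exactly where the homogeneous Dirichlet condition is used, through $J_E(\bs\sigma)=\bs\sigma\cdot\bs t_E=-\partial u/\partial\bs t_E=0$. For Brezzi--Douglas--Marini spaces the three facts $\div\bs\sigma_H=f_H$, ${\rm rot}\,\bs\sigma=0$ and $J_E(\bs\sigma)=0$ are unchanged and the rot-- and jump--term estimates go through in the same way; following Alonso, the hypothesis ${\rm osc}(f,\mcal T_H)=0$ is brought in at the step that compares $\bs\sigma_H$ with the continuous solution of data $f_H$, and under it \eqref{eq:lowerbound} holds as stated.
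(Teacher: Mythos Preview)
The paper does not give its own proof of this theorem; it is recalled from Alonso's paper and simply cited. Your bubble--function argument is the standard Verf\"urth efficiency proof for residual estimators and is correct: the element bound $H_T\|{\rm rot}\,\bs\sigma_H\|_{0,T}\lesssim\|\bs\sigma-\bs\sigma_H\|_{0,T}$ and the edge bound $H_E^{1/2}\|J_E(\bs\sigma_H)\|_{0,E}\lesssim\|\bs\sigma-\bs\sigma_H\|_{0,\Omega_E}$ are exactly what one finds in Alonso's treatment, and the summation with finite patch overlap gives \eqref{eq:lowerbound}.

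One remark on your last paragraph. Your handling of the Brezzi--Douglas--Marini case is vague: you invoke the hypothesis ${\rm osc}(f,\mcal T_H)=0$ ``at the step that compares $\bs\sigma_H$ with the continuous solution of data $f_H$,'' but no such step appears in your argument. Your proof uses only three ingredients --- ${\rm rot}\,\bs\sigma=0$, $J_E(\bs\sigma)=0$, and that ${\rm rot}\,\bs\sigma_H|_T$ and $J_E(\bs\sigma_H)|_E$ are polynomials of fixed degree --- and all three hold for BDM exactly as for RT. Nothing about $\div\bs\sigma_H$ or $f$ enters. So your own argument already yields \eqref{eq:lowerbound} for BDM without any oscillation hypothesis, and you should say so rather than defer to a step you have not written down. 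The restriction in the paper's statement is inherited from Alonso's original presentation; the direct bubble--function route you have taken does not need it.
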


When ${\rm osc}(f,\mcal T_H)=0$, (\ref{eq:upperbound}) and
(\ref{eq:lowerbound}) implies that $C_2/C_1\leq 1$. This ratio is a
measure of the precision of the indicator.

\subsection{Discrete upper bound}
We shall give a discrete version of the upper bound
(\ref{eq:upperbound}). The main tool is the discrete Helmholtz
decomposition.

Given a shape regular triangulation $\mathcal T_h$, let
$$S^{p}_{h}=\{\psi_h\in \mathcal C(\overline \Omega): \psi _h|_{T}\in
\mathcal P_p(T), \; \forall \, T\in \mathcal T_h\}$$ denote the
standard continuous and piecewise polynomial finite element spaces of
$H^1(\Omega)$. To introduce the discrete Helmholtz decomposition, we
define the dual operator operator of ${\rm div}: \boldsymbol \Sigma _h
\mapsto U_h$.
\begin{definition}
 We define ${\rm grad} _h: U_h\mapsto (\boldsymbol \Sigma _h)^*$ by
 $$
 ({\rm grad} _h v_h,\boldsymbol \tau _h) = -(v_h, {\rm div\, }
 \boldsymbol \tau _h), \quad \forall \boldsymbol \tau _h\in
 \boldsymbol \Sigma _h.
$$
\end{definition}
We emphasis that ${\rm grad}_h$ is not simply the restriction of ${\rm
  grad}$ to $U_h$ since $U_h$ is not a subspace of $H^1(\Omega)$. The following discrete Helmholtz
decomposition is well known in the literature; See, for example,
\cite{Fix.G;Gunzburger.M;Nicolaides.R1981,Brandts.J1994a,
  Arnold.D;Falk.R;Winther.R2005,Bochev.P;Gunzburger.M2005}.

\begin{theorem}[Discrete Helmholtz Decomposition in $\mathbb R^2$]
  Given a triangulation $\mathcal T_h$, for $p$-th order
  Raviart-Thomas finite element spaces $(\boldsymbol \Sigma _h^p,
  U_h^p)$, we have the following orthogonal (with respect to $L^2$
  inner product) decomposition
$$
\Sigma _h^p={\rm curl}(S^{p+1}_{h})\oplus {\rm grad _h}\,  (U_h^p).
$$
For Brezzi-Douglas-Marini finite element spaces $(\boldsymbol \Sigma
_h^p, U_h^p)$, we have the following orthogonal (with respect
to $L^2$ inner product) decomposition
$$
\Sigma _h^p={\rm curl}(S^{p+1}_{h})\oplus {\rm grad _h}\,  (U_h^{p-1}).
$$
\end{theorem}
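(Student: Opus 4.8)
The plan is to show that $\curl(S^{p+1}_h)$ and $\grad_h(U_h)$ are subspaces of $\bs\Sigma_h^p$, that they are $L^2$-orthogonal, and that their sum has no nonzero orthogonal complement in $\bs\Sigma_h^p$; here $U_h$ denotes the companion scalar space of the mixed pair, namely $U_h^p$ for Raviart--Thomas and $U_h^{p-1}$ for Brezzi--Douglas--Marini, and $(\bs\Sigma_h)^*$ is identified with $\bs\Sigma_h$ via the $L^2$ Riesz isomorphism, so that $\grad_h v_h\in\bs\Sigma_h$ is characterized by $(\grad_h v_h,\bs\tau_h)=-(v_h,\div\bs\tau_h)$ for all $\bs\tau_h\in\bs\Sigma_h$. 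First I would check the two inclusions. For $\psi_h\in S^{p+1}_h$, on each $T\in\mcal T_h$ the field $\curl\psi_h$ is a vector polynomial of degree at most $p$, hence lies in $\boldsymbol P_p(T)$, which is contained in $\bs\Sigma_h^p|_T$ for both element families; moreover the continuity of $\psi_h$ makes the normal component of $\curl\psi_h$ continuous across every interior edge (it equals, up to orientation, the tangential derivative of $\psi_h$ along that edge), so $\curl\psi_h\in\bs H(\div;\Omega)$ and thus $\curl(S^{p+1}_h)\subseteq\bs\Sigma_h^p$. The inclusion $\grad_h(U_h)\subseteq\bs\Sigma_h^p$ holds by construction.

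Orthogonality is immediate: for $\psi_h\in S^{p+1}_h$ and $v_h\in U_h$, using the defining identity of $\grad_h$ with the admissible test function $\bs\tau_h=\curl\psi_h$ gives $(\curl\psi_h,\grad_h v_h)=-(v_h,\div\curl\psi_h)=0$, since $\div\curl\psi_h$ vanishes on each element and hence in $L^2(\Omega)$.

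For completeness, suppose $\bs\tau_h\in\bs\Sigma_h^p$ is $L^2$-orthogonal to both $\curl(S^{p+1}_h)$ and $\grad_h(U_h)$. Orthogonality to $\grad_h(U_h)$ means $(v_h,\div\bs\tau_h)=0$ for every $v_h\in U_h$; since $\div$ maps $\bs\Sigma_h^p$ into $U_h$ in both families, taking $v_h=\div\bs\tau_h$ forces $\div\bs\tau_h=0$. Because $\Omega\subset\R^2$ is simply connected, a divergence-free field in $\bs L^2(\Omega)$ has a stream function, so there is $\phi\in H^1(\Omega)$ with $\curl\phi=\bs\tau_h$. On each $T\in\mcal T_h$, $\nabla\phi|_T$ is (up to rotation) the polynomial $\bs\tau_h|_T$ of degree at most $p$, so $\phi|_T$ is a polynomial of degree at most $p+1$; being piecewise polynomial and of class $H^1(\Omega)$, $\phi$ is continuous across interelement edges, hence $\phi\in S^{p+1}_h$. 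Therefore $\bs\tau_h=\curl\phi\in\curl(S^{p+1}_h)$, and together with $\bs\tau_h\perp\curl(S^{p+1}_h)$ this gives $\|\bs\tau_h\|^2=0$. Consequently the orthogonal complement of $\curl(S^{p+1}_h)+\grad_h(U_h)$ in $\bs\Sigma_h^p$ is $\{0\}$, which with the orthogonality above yields the orthogonal decomposition $\bs\Sigma_h^p=\curl(S^{p+1}_h)\oplus\grad_h(U_h)$. The Raviart--Thomas and Brezzi--Douglas--Marini cases differ only in which space plays the role of $U_h$; the two structural facts actually used, $\boldsymbol P_p(T)\subseteq\bs\Sigma_h^p|_T$ and $\div\bs\Sigma_h^p\subseteq U_h$, hold for both.

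The step I expect to be the main obstacle is passing from the elementwise condition $\div\bs\tau_h=0$ to a \emph{global} potential in $S^{p+1}_h$: this is exactly where the simple connectivity of $\Omega$ is needed, and one must verify that the stream function, a priori only in $H^1(\Omega)$, is genuinely piecewise polynomial of degree $p+1$ and inherits interelement continuity from its $H^1$-regularity (equivalently, if one builds it elementwise, that the triangle-wise potentials can be normalized consistently around every interior vertex). The remaining ingredients --- the two inclusions, the orthogonality, and the mapping property of $\div$ --- are routine and identical in the two element families.
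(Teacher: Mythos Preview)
Your argument is correct and is the standard proof of this decomposition. The paper itself does not give a proof; it states the result as well known and cites the literature, so there is nothing to compare against.

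One point you should make explicit: in the Raviart--Thomas case, the local shape space is $\boldsymbol P_p(T)+\boldsymbol x\,\mathcal P_p(T)$, so a priori $\bs\tau_h|_T$ need not lie in $\boldsymbol P_p(T)$. You use $\div\bs\tau_h=0$ on $T$ to force this. Writing $\bs\tau_h|_T=\boldsymbol p+\boldsymbol x\,q$ with $\boldsymbol p\in\boldsymbol P_p(T)$ and $q$ homogeneous of degree $p$, one has $\div(\boldsymbol x\,q)=(2+p)q$ by Euler's identity, while $\div\boldsymbol p\in\mathcal P_{p-1}(T)$; hence $\div\bs\tau_h=0$ forces $q=0$ and $\bs\tau_h|_T\in\boldsymbol P_p(T)$, so the stream function is indeed a polynomial of degree at most $p+1$ on each element. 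With this filled in, the passage from a global $H^1$ stream function to membership in $S^{p+1}_h$ is exactly as you describe.
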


We are in the position to present a discrete version of the upper
bound.
\begin{theorem}\label{th:upperbound}
  Let $\mathcal T_h$ and $\mathcal T_H$ be two nested conforming
  triangulations. Let $\boldsymbol \sigma _h=\mathcal L^{-1}_hf_h$ and
  $\boldsymbol \sigma _H=\mathcal L^{-1}_Hf_H$, and let $\mathcal F _H
  = \{E\in \mcal E_H: E\notin \mathcal E_h\}$. Then there exist
  constants depending only on the shape regularity of $\mathcal T_H$ such
  that
  \begin{equation}\label{eq:upper}
    \|\boldsymbol \sigma _h-\boldsymbol \sigma _H\|^2\leq C_1\eta ^2(\boldsymbol \sigma _H, \mathcal F_H)+C_0{\rm osc}^2(f_h, \mathcal T_H)
      \end{equation}
and
\begin{equation}
  \label{eq:upperF}
\# \mathcal F_H\leq 3(\# \mathcal T_h -\# \mathcal T_H).
  \end{equation}
\end{theorem}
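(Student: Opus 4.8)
The plan is to split the discrete error into a solenoidal part and a $\mathrm{grad}_h$ part by the discrete Helmholtz decomposition, dispatch the latter with the discrete stability estimate, and reduce the former to the edge estimator over $\mcal F_H$ by integrating by parts against a coarse-grid interpolant. Concretely, I would reintroduce the intermediate solution $\tilde{\bs\sigma}_h=\mcal L_h^{-1}f_H$. Subtracting the discrete mixed systems (\ref{eq:mixfem1})--(\ref{eq:mixfem2}) on $\bs\Sigma_h\times U_h$ shows $\bs\sigma_h-\tilde{\bs\sigma}_h\in{\rm grad}_h(U_h)$, while $\tilde{\bs\sigma}_h-\bs\sigma_H\in\bs\Sigma_h$ is divergence free (both divergences equal $f_H$) and hence lies in $\curl(S^{p+1}_h)$ by the discrete Helmholtz decomposition; the $L^2$-orthogonality $({\rm grad}_h v_h,\curl\psi_h)=-(v_h,\div\curl\psi_h)=0$ then gives
\[
\|\bs\sigma_h-\bs\sigma_H\|^2=\|\bs\sigma_h-\tilde{\bs\sigma}_h\|^2+\|\tilde{\bs\sigma}_h-\bs\sigma_H\|^2 .
\]
By Theorem~\ref{th:discretestability} the first term is $\le C_0\,{\rm osc}^2(f_h,\mcal T_H)$, which is the data-oscillation contribution in (\ref{eq:upper}), so everything reduces to bounding $\|\tilde{\bs\sigma}_h-\bs\sigma_H\|^2$.

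For that term I would write $\tilde{\bs\sigma}_h-\bs\sigma_H=\curl\psi_h$ with $\psi_h\in S^{p+1}_h$. Testing (\ref{eq:mixfem1}) for $\tilde{\bs\sigma}_h$ with $\curl\psi_h\in\bs\Sigma_h$ and using $\div\curl\psi_h=0$ yields $(\tilde{\bs\sigma}_h,\curl\psi_h)=0$; likewise $(\bs\sigma_H,\curl\psi_H)=0$ for every $\psi_H\in S^{p+1}_H$. Hence for any such $\psi_H$,
\[
\|\curl\psi_h\|^2=-(\bs\sigma_H,\curl(\psi_h-\psi_H)),
\]
and since $\bs\sigma_H$ is piecewise polynomial on $\mcal T_H$ while $\psi_h-\psi_H\in H^1(\Omega)$, an elementwise integration by parts over $\mcal T_H$ rewrites the right-hand side as a sum of volume residuals against ${\rm rot}\,\bs\sigma_H$ over the elements of $\mcal T_H$ and of tangential-jump residuals against $J_E(\bs\sigma_H)$ over the edges of $\mcal E_H$, each weighted by $\psi_h-\psi_H$.

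The decisive choice is $\psi_H\in S^{p+1}_H$: I would take $\psi_H$ to agree with $\psi_h$ on every element of $\mcal T_H\cap\mcal T_h$ (legitimate, since $\mcal T_h$ and $\mcal T_H$ coincide there) and to match $\psi_h$ on every non-subdivided edge of $\mcal E_H$, defining the remaining degrees of freedom (interior to the refined part, and on subdivided edges) by a bounded, constant-preserving local averaging of $\psi_h$. Then $\psi_h-\psi_H$ is supported on $\mcal T_H^+:=\mcal T_H\setminus\mcal T_h$ and vanishes on every edge outside $\mcal F_H$, so the two sums collapse to $T\in\mcal T_H^+$ and $E\in\mcal F_H$; since every $T\in\mcal T_H^+$ shares its subdivided refinement edge with $\mcal F_H$, so that $\bigcup_{E\in\mcal F_H}\Omega_E$ covers $\mcal T_H^+$ with bounded multiplicity, these are exactly the quantities entering $\eta^2(\bs\sigma_H,\mcal F_H)$. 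Standard Bramble--Hilbert, inverse and trace estimates give $\|\psi_h-\psi_H\|_{0,T}\le CH_T\|\nabla\psi_h\|_{0,\omega_T}$ and $\|\psi_h-\psi_H\|_{0,E}\le CH_E^{1/2}\|\nabla\psi_h\|_{0,\omega_E}$ on shape-regular patches; Cauchy--Schwarz, finite overlap, the identity $\|\nabla\psi_h\|=\|\curl\psi_h\|$, and cancellation of one factor $\|\curl\psi_h\|$ then give $\|\tilde{\bs\sigma}_h-\bs\sigma_H\|^2\le C_1\,\eta^2(\bs\sigma_H,\mcal F_H)$, which with the splitting above is (\ref{eq:upper}). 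I expect the main obstacle to be precisely the construction and analysis of $\psi_H$: forcing $\psi_h-\psi_H$ to be supported on the refined elements and the subdivided edges only, so that the right-hand side involves $\eta$ on $\mcal F_H$ alone, while keeping the local first-order approximation bounds valid on strongly graded meshes and tying the surviving element and edge contributions to the patch structure of $\eta_E$.

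Finally, (\ref{eq:upperF}) is elementary. Every $E\in\mcal F_H$ is subdivided in passing from $\mcal T_H$ to $\mcal T_h$, so both $\mcal T_H$-elements meeting $E$ lie in $\mcal T_H^+$; counting incidences (three edges per element, each counted at least once) gives $\#\mcal F_H\le 3\#\mcal T_H^+$. Each element of $\mcal T_H^+$ is split into at least two elements of $\mcal T_h$ while the elements of $\mcal T_H\cap\mcal T_h$ persist unchanged, so comparing cardinalities yields $\#\mcal T_h-\#\mcal T_H\ge\#\mcal T_H^+$; combining the two bounds gives $\#\mcal F_H\le 3(\#\mcal T_h-\#\mcal T_H)$.
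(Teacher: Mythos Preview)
Your proposal is correct and follows essentially the same route as the paper: introduce $\tilde{\bs\sigma}_h=\mcal L_h^{-1}f_H$, split $\bs\sigma_h-\bs\sigma_H$ orthogonally into $\bs\sigma_h-\tilde{\bs\sigma}_h$ (handled by the discrete stability result) and the divergence-free piece $\tilde{\bs\sigma}_h-\bs\sigma_H=\curl\psi_h$, then test against $\curl(\psi_h-\psi_H)$ for a coarse interpolant $\psi_H$ that coincides with $\psi_h$ on the unrefined region so that only edges in $\mcal F_H$ survive. The paper names this interpolant explicitly as a Scott--Zhang quasi-interpolation $\mcal I_H$ (with averaging sets chosen inside the unrefined region to force the required locality), and proves~(\ref{eq:upperF}) via edge counting $\#\mcal F_H\le\#\mcal E_h-\#\mcal E_H\le 3(\#\mcal T_h-\#\mcal T_H)$ rather than via $\#\mcal T_H^+$; these are cosmetic differences only.
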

\begin{proof}
The inequality (\ref{eq:upperF}) follows from
$$
\# \mathcal F_H\leq \# \mcal E_h - \# \mcal E_H \leq 3(\# \mathcal T_h
-\# \mathcal T_H).
$$
To prove (\ref{eq:upper}), again we introduce the intermediate
solution $\tilde {\boldsymbol \sigma} _h=\mathcal L^{-1}_hf_H$. By the
discrete Helmholtz decomposition, we have
$$
\tilde {\boldsymbol \sigma} _h-\boldsymbol \sigma _H={\rm grad}_h\,
\phi _h + {\rm curl\,} \psi _h,
$$
where $\phi _h\in U_h^p, \psi _h\in S^{p+1}_h$ for Raviart-Thomas
spaces, and $\phi _h\in U_h^{p-1}, \psi _h\in S^{p+1}_h,$ for
Brezzi-Douglas-Marini spaces. The decomposition is $L^2$-orthogonal i.e.
\begin{equation}
  \label{eq:disL2}
\|\tilde{\bs\sigma}_h - \bs\sigma _H\|^2=\|\grad _h \phi
_h\|^2+\|\curl \psi _h\|^2.
\end{equation}

  In two dimensions, $\|\curl \psi _h\| = \|\grad \psi _h\|$ and thus
  (\ref{eq:disL2}) implies that
\begin{equation}
  \label{eq:psiH1}
  |\psi _h|_1 \leq \|\tilde{\bs\sigma}_h - \bs\sigma _H\|.
\end{equation}
Since
$$
(\tilde {\boldsymbol \sigma} _h-\boldsymbol \sigma _H, {\rm
  grad}_hv_h) =({\rm div\,}\,(\tilde {\boldsymbol \sigma}
_h-\boldsymbol \sigma _H), v_h) =(f_H - f_H, v_h-v_H)=0,
$$
we have
$$
\|\tilde{\bs\sigma}_h - \bs\sigma _H\|^2= (\tilde{\bs\sigma}_h -
\bs\sigma _H,\grad _h \phi _h) + (\tilde{\bs\sigma}_h - \bs\sigma _H,
\curl \psi _h)=(\tilde{\bs\sigma}_h - \bs\sigma _H,
\curl \psi _h).
$$
Since $\div \curl \psi =0$, $(\tilde{\bs\sigma}_h,\curl \psi
_h)=0$ and $(\bs \sigma _H, \curl \psi _H)=0$ for any $\psi _H\in
S^{p+1}_H$. 

Choosing $\psi _H = \mcal I_H\psi _h$ using some local
quasi-interpolation, for example the Scott-Zhang quasi-interpolation
\cite{Scott.R;Zhang.S1990}, $\mcal I_H: S^{p+1}_{h}\mapsto
S^{p+1}_{H}$ such that
$$
\|\psi _h-\mcal I_H\psi _h\|_{0,E}\leq C H^{1/2}_E|\psi
_h|_{1,\Omega_E}\quad \hbox{and} \quad \|\psi _h - \mcal I_H\psi
_h\|_{0,T}\leq C H_T|\psi _h|_{1,\Omega_T},
$$
where $\Omega _T=\{T_H\subset \mathcal T_H : T_H\cap T\neq
\emptyset\}$. Furthermore the quasi-interpolation $\mcal I_H$ is local
in the sense that if $T\in \mcal T_H\cap \mcal T_h$ or $E\in \mcal
E_H\cap \mcal E_h$ (i.e. $T$ or $E$ is not refined), then $(\psi _h -
\mcal I_H\psi _h)|_T=0$ or $(\psi _h - \mcal I_H\psi _h)|_E=0$,
respectively. With such choice of $\mathcal F_H$ and $\psi _H$, we
have
\begin{align*}
  \|\tilde {\boldsymbol \sigma} _h-\boldsymbol \sigma _H\|^2&=(\tilde {\boldsymbol \sigma} _h-\boldsymbol \sigma _H,{\rm curl\,} \psi _h)=(-\boldsymbol \sigma _H,{\rm curl\,} (\psi _h-\psi _H))\\
  &=\sum _{T\in \mathcal T_H}\Big [\sum _{E\in \partial T }
  (\boldsymbol \sigma _H\cdot \boldsymbol t_E,\psi _h-\psi _H)_E+({\rm
    rot}\, \boldsymbol \sigma _H, \psi _h - \psi _H)_T \Big ]\\
  &\leq \sum _{E\in \mcal E_H}[\bs \sigma _H]\|\psi _h - \psi
  _H\|_{0,E} + \sum _{T\in \mcal T_H}\|{\rm rot} \, \bs \sigma
  _H\|\|\psi _h - \psi _H\|_{0,T},\\
  &\leq C\eta(\boldsymbol \sigma _H,\mathcal F_H)|\psi _h|_1 \leq
  C_1\eta(\boldsymbol \sigma _H,\mathcal F_H)\|\tilde {\boldsymbol
    \sigma} _h-\boldsymbol \sigma _H\|.
\end{align*}
Canceling one $\|\tilde {\boldsymbol \sigma} _h-\boldsymbol \sigma
_H\|,$ we get 
\begin{equation}
  \label{eq:C1}
 \|\tilde {\boldsymbol \sigma} _h-\boldsymbol \sigma
_H\|\leq C\eta (\boldsymbol \sigma _H, \mathcal F_H). 
\end{equation}

Now we write $\boldsymbol \sigma _h-\boldsymbol \sigma _H=\boldsymbol \sigma _h-\tilde {\boldsymbol \sigma}
_h+\tilde {\boldsymbol \sigma}_h-\boldsymbol \sigma _H$ and note that 
$$(\boldsymbol \sigma _h-\tilde {\boldsymbol \sigma}
_h,\tilde {\boldsymbol \sigma} _h-\boldsymbol \sigma _H)=(u_h-\tilde
{u}_h, \div (\tilde {\boldsymbol \sigma} _h-\boldsymbol \sigma
_H)=(u_h-\tilde {u}_h, f_H-f_H)=0.
$$  
Combining (\ref{eq:C1}) and (\ref{eq:disstablity}), we then have
$$
\|\boldsymbol \sigma _h-\boldsymbol \sigma _H\|^2= \|\tilde
{\boldsymbol \sigma} _h-\boldsymbol \sigma _H\|^2+\|\boldsymbol \sigma
_h-\tilde {\boldsymbol \sigma} _h\|^2\leq C_1\eta^2(\boldsymbol \sigma
_H, \mathcal F_H)+C_0{\rm osc}^2(f_h, \mathcal T_H).
$$
\end{proof}

\section{Convergence and Optimality of AMFEM}
In this section we shall present our algorithms and prove their
convergence and optimality. It is adapted from the literature
\cite{Morin.P;Nochetto.R;Siebert.K2000,Stevenson.R2007,Dorfler.W1996,Morin.P;Nochetto.R;Siebert.K2002,Morin.P;Nochetto.R;Siebert.K2003}. For
the completeness we include them here and prove some important
technical results.

We first present our algorithms. It mainly follows from the algorithm
proposed in \cite{Morin.P;Nochetto.R;Siebert.K2003}. The difference is
that we do not impose an interior point
property in the refinement step. 

Let $\mathcal T_0$ be a initial shape regular triangulation, a right hand
side $f\in L^2(\Omega)$, a tolerance $\varepsilon$, and $0<\theta,
\tilde \theta, \mu <1$ three parameters. Thereafter we replace the
subscript $h$ by an iteration counter called $k$. For a marked edge
set $\mcal M_k$, we denote by $\Omega _{\mcal M_k}=\cup _{E\in \mcal
  M_k}\Omega _E$.

\medskip
\noindent $[\mathcal T_N,\bs \sigma_N]$={\bf \small AMFEM}$(\mathcal
T_0, f,\varepsilon, \theta, \tilde \theta,\mu)$

$\eta = \varepsilon, k=0$

\smallskip

{\bf \small WHILE} $\eta \geq \varepsilon$, {\bf \small DO}

\quad Solve (\ref{eq:mixfem1})-(\ref{eq:mixfem2}) on
$\mathcal T_k$ to get the solution $\bs \sigma_k$.

\quad Compute the error estimator
$\eta=\eta(\bs \sigma_k,\mathcal E_k)$.

\quad Mark the minimal edge set $\mathcal M_k$ such that
  \begin{equation}
    \label{eq:theta}
 \eta ^2(\bs \sigma_k,\mathcal M_k)\geq \theta \, \eta ^2(\bs
 \sigma_k,\mathcal E_k).    
  \end{equation}

  \quad If ${\rm osc}(f,\mcal T_k) > {\rm osc}(f,\mcal T_0) \mu ^k$,
  enlarge $\mcal M_k$ such that
  \begin{equation}
    \label{eq:tildetheta}
    {\rm osc}(f,\Omega_{\mathcal M_k}) \geq \tilde \theta \, {\rm osc}(f,\mcal T_k).    
  \end{equation}

  \quad Refine each triangle $\tau \in \Omega_{\mathcal
      M_k}$ by the newest vertex bisection to get $\mcal T_{k+1}$.

\quad $k=k+1$.

\smallskip

{\bf \small END WHILE}

\smallskip

$\mcal T_N=\mcal T_k$.

\noindent {\bf \small END AMFEM}

\subsection{Convergence of AMFEM}
We shall prove the algorithm {\bf \small AMFEM} will terminate in
finite steps by showing the reduction of the sum of the error and the
error estimator.

We first summarize the main ingredients in the following lemma with
the following short notation:
$$
  e_{k} = \|\bs \sigma - \bs \sigma _{k}\|^2, 
  E_k = \|\bs \sigma_{k+1} - \bs \sigma _k\|^2, 
  o_k =  \operatorname{osc}^2(f,\mathcal T_k),
\; \text{and} \; \eta _k = \eta ^2(\bs \sigma _k, \mathcal E_k).
$$

\begin{lemma}
One has the following inequalities
\begin{align}
  (1-\delta)\, e_{k+1} &\leq e_k - E_k + \frac{C_0}{\delta}\, o_k, \quad \text{for any }\delta >0\label{eq:1}\\
  \beta \,\eta _{k+1} &\leq \beta (1-\frac{1}{2}\theta)\,\eta _k +
  E_k, \label{eq:2}\\
  e_k &\leq C_1\,\eta _k + C_0\, o_k. \label{eq:3}
\end{align}
\end{lemma}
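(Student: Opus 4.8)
The plan is to read off each of the three inequalities, in the stated shorthand, from results already established, applying them with the nested pair $(\mcal T_h,\mcal T_H)=(\mcal T_{k+1},\mcal T_k)$ (nestedness being guaranteed by the {\bf \small REFINE} step); only \eqref{eq:2} needs a genuinely new argument. Inequality \eqref{eq:1} is precisely the quasi-orthogonality estimate \eqref{eq:quasiorth2} of Theorem \ref{th:quasiorth} with $h=k+1$, $H=k$: it gives $(1-\delta)e_{k+1}\le e_k-E_k+\tfrac{C_0}{\delta}{\rm osc}^2(f_{k+1},\mcal T_k)$, and since $\bs\sigma_{k+1}=\mcal L_{k+1}^{-1}f_{k+1}$ with $f_{k+1}=Q_{k+1}f$, Remark \ref{remark} lets us bound ${\rm osc}^2(f_{k+1},\mcal T_k)\le{\rm osc}^2(f,\mcal T_k)=o_k$. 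Inequality \eqref{eq:3} is just the upper bound \eqref{eq:upperbound} of Theorem \ref{th:Alonso} applied on $\mcal T_k$, since $\eta_k=\eta^2(\bs\sigma_k,\mcal E_k)$ and $o_k={\rm osc}^2(f,\mcal T_k)$; nothing more is required.

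The content is in \eqref{eq:2}, the estimator reduction, for which I would adapt the Cascon--Kreuzer--Nochetto--Siebert scheme to the edge estimator. First, the $L^2$-Lipschitz continuity \eqref{eq:continunity}, applied on the triangulation $\mcal T_{k+1}$ to the two vector fields $\bs\sigma_{k+1}$ and $\bs\sigma_k$ (both lie in $\bs\Sigma_{k+1}$, since the spaces are nested), gives
\begin{equation*}
\beta\,\eta_{k+1}=\beta\,\eta^2(\bs\sigma_{k+1},\mcal E_{k+1})\le\beta\,\eta^2(\bs\sigma_k,\mcal E_{k+1})+\|\bs\sigma_{k+1}-\bs\sigma_k\|^2=\beta\,\eta^2(\bs\sigma_k,\mcal E_{k+1})+E_k,
\end{equation*}
so it suffices to establish the geometric reduction $\eta^2(\bs\sigma_k,\mcal E_{k+1})\le\eta^2(\bs\sigma_k,\mcal E_k)-\tfrac12\,\eta^2(\bs\sigma_k,\mcal M_k)$; combined with the D\"orfler property \eqref{eq:theta} this yields $\eta^2(\bs\sigma_k,\mcal E_{k+1})\le(1-\tfrac12\theta)\,\eta_k$ and hence \eqref{eq:2}. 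I would prove the geometric reduction edge by edge on $\mcal T_{k+1}$: an edge surviving from $\mcal T_k$ keeps its jump term and can only lose in $\|H\,{\rm rot}\,\bs\sigma_k\|_{0,\Omega_E}$, because local mesh sizes do not increase under refinement; an interior edge newly created by a bisection lies inside a single cell of $\mcal T_k$, on which $\bs\sigma_k$ is a polynomial, so its tangential jump vanishes and only a (smaller) ${\rm rot}$ contribution survives; and for each refined marked edge the two children have half the length and sit on cells of at most half the diameter, so their combined $\eta_E^2$ is at most $\tfrac12$ of the parent's. That each cell's ${\rm rot}\,\bs\sigma_k$ is counted a bounded, shape-regularity-dependent number of times across the patches $\Omega_E$ is harmless, being absorbed into $\beta$.

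The step I expect to be the main obstacle is this edge-by-edge bookkeeping. One must make sure that every edge of $\mcal M_k$ is in fact bisected by refining $\Omega_{\mcal M_k}$ (so the $\tfrac12$ gain is realized on all of $\mcal M_k$, not only on edges lying opposite a newest vertex), invoke the two properties of newest vertex bisection recalled in Section 2 so that all constants depend only on the shape regularity of $\mcal T_0$, and keep careful track of the multiply-counted ${\rm rot}$ terms and of the diameter-versus-edge-length scalings; everything else is routine.
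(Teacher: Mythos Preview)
Your proposal is correct and follows essentially the same approach as the paper: \eqref{eq:1} and \eqref{eq:3} are read off from Theorem~\ref{th:quasiorth}/Remark~\ref{remark} and Theorem~\ref{th:Alonso} exactly as you say, and for \eqref{eq:2} the paper also applies the continuity bound \eqref{eq:continunity} on $\mcal T_{k+1}$ and then argues edge by edge to get $\eta^2(\bs\sigma_k,\mcal E_{k+1})\le(1-\tfrac12\theta)\eta_k$. The only organizational difference is that the paper introduces the set $\overline{\mcal M}_k\supseteq\mcal M_k$ of \emph{all} refined edges of $\mcal T_k$, proves $\eta^2(\bs\sigma_k,\mcal N_{k+1})\le\tfrac12\eta^2(\bs\sigma_k,\overline{\mcal M}_k)$ for the new edges $\mcal N_{k+1}=\mcal E_{k+1}\setminus\mcal E_k$, and then uses $\eta^2(\bs\sigma_k,\overline{\mcal M}_k)\ge\eta^2(\bs\sigma_k,\mcal M_k)\ge\theta\eta_k$; this neatly sidesteps the concern you flag about whether every edge of $\mcal M_k$ actually gets bisected.
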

\begin{proof}
  (\ref{eq:1}) is the quasi-orthogonality (\ref{eq:quasiorth2})
  established in Theorem \ref{th:quasiorth} and Remark \ref{remark}. (\ref{eq:3}) is the
  upper bound (\ref{eq:upperbound}) in Theorem \ref{th:upperbound}. We
  only need to prove (\ref{eq:2}). By the continuity of the error
  estimator (\ref{eq:continunity}), we have
  \begin{equation}
    \label{eq:eta0}
\beta \eta^2(\bs \sigma _{k+1},\mcal E_{k+1}) \leq \beta \eta
^2(\bs\sigma_k, \mathcal E_{k+1}) + E_k.    
  \end{equation}
Let $\mathcal N_{k+1}=\mcal E_{k+1}\backslash \mcal E_k$ be the new
edges in $\mcal T_{k+1}$ and $\overline{\mcal M}_{k} \subseteq \mcal
E_k$ be the refined edge in $\mcal T_{k}$. It is obvious that
$\mathcal E_k \backslash \overline{\mathcal M}_k = \mathcal E_{k+1}
\backslash \mathcal N_{k+1}$. For an edge $E\in \mcal N_{k+1}$, if it
is an interior edge of some triangle $T\in \mcal T_k$, then $J_E(\bs
\sigma _k)=0$ since $\bs \sigma _k$ is a polynomial in $T$. For other edges, it is at least half of some edge in
$\overline{\mcal M}_{k}$ and thus
\begin{equation}
  \label{eq:eta1}
\eta ^2(\bs \sigma _k, \mathcal N_{k+1})\leq \frac{1}{2}\, \eta ^2(\bs
\sigma _k, \overline{\mathcal M}_k).  
\end{equation}
Since some edges are refined for the conformity of triangulation,
$\mcal M_k\subseteq \overline{\mcal M}_k$. By the marking strategy (\ref{eq:theta}), we have 
\begin{equation}
  \label{eq:eta2}
  \eta ^2(\bs \sigma_k,\overline{\mathcal M}_k) \geq  \eta ^2(\bs \sigma_k,\mathcal M_k)\geq \theta \, \eta ^2(\bs
  \sigma_k,\mathcal E_k).  
\end{equation}
Combining (\ref{eq:eta1}) and (\ref{eq:eta2}), we get
\begin{align*}
  \eta ^2(\bs \sigma _k, \mathcal E_{k+1}) &= \eta ^2(\bs \sigma _k,
  \mathcal N_{k+1}) + \eta ^2(\bs \sigma _k, \mathcal E_{k+1}
  \backslash
  \mathcal N_{k+1})\\
  &\leq \frac{1}{2} \eta ^2(\bs \sigma _k, \overline{\mathcal M}_k) +
  \eta ^2(\bs \sigma _k, \mathcal E_k
  \backslash \overline{\mathcal M}_k)\\
  &\leq -\frac{1}{2} \eta ^2(\bs \sigma _k, \overline{\mathcal M}_k) + \eta ^2(\bs \sigma _k, \mcal E_k)\\
  & \leq (1-\frac{1}{2}\theta)\eta ^2(\bs \sigma _k, \mcal E_k).
\end{align*}
Substituting to (\ref{eq:eta0}) we then get (\ref{eq:2}).
\end{proof}

\begin{theorem}
  When 
  \begin{equation}
    \label{eq:delta}
0<\delta < \min\{ \frac{\beta}{2C_1}\theta, 1\},    
  \end{equation}
there exists $\alpha \in (0,1)$ and $C_{\delta}$ such that
  \begin{equation}
    \label{eq:reduction}
    (1-\delta)e_{k+1} + \beta \eta _{k+1} \leq \alpha \big [(1-\delta)e_k + \beta \eta _k\big ] + C_{\delta}\, o_k.
  \end{equation}
\end{theorem}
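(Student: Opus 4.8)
The plan is to combine the three inequalities of the preceding lemma so that the terms $E_k$ cancel, producing a strict contraction on the combined quantity $(1-\delta)e_k+\beta\eta_k$ up to a multiple of the oscillation $o_k$.

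First I would add (\ref{eq:1}) and (\ref{eq:2}). The summand $+E_k$ in (\ref{eq:2}) cancels the $-E_k$ in (\ref{eq:1}), giving
$$
(1-\delta)\,e_{k+1}+\beta\,\eta_{k+1}\;\leq\; e_k+\beta\bigl(1-\tfrac{1}{2}\theta\bigr)\eta_k+\frac{C_0}{\delta}\,o_k.
$$
The coefficient of $e_k$ on the right is $1$ rather than $\alpha(1-\delta)$, so I need to borrow from the slack in the $\eta_k$ term. To do this I would fix an auxiliary parameter $\lambda$ with
$$
\delta<\lambda<\frac{\beta\theta}{2C_1},
$$
which is possible precisely because hypothesis (\ref{eq:delta}) forces $\delta<\beta\theta/(2C_1)$. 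Splitting $e_k=(1-\lambda)e_k+\lambda e_k$ and bounding $\lambda e_k$ by the upper bound (\ref{eq:3}), namely $\lambda e_k\leq\lambda C_1\eta_k+\lambda C_0 o_k$, yields
$$
(1-\delta)\,e_{k+1}+\beta\,\eta_{k+1}\;\leq\;(1-\lambda)\,e_k+\Bigl(\beta-\tfrac{\beta\theta}{2}+\lambda C_1\Bigr)\eta_k+C_0\Bigl(\tfrac{1}{\delta}+\lambda\Bigr)o_k.
$$

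Then I would set $C_\delta:=C_0\bigl(\tfrac{1}{\delta}+\lambda\bigr)$ and
$$
\alpha:=\max\Bigl\{\frac{1-\lambda}{1-\delta},\;\;1-\frac{\theta}{2}+\frac{\lambda C_1}{\beta}\Bigr\}.
$$
The condition $\lambda>\delta$ makes the first entry lie in $(0,1)$, and $\lambda<\beta\theta/(2C_1)$ makes the second entry lie in $(0,1)$; both entries are positive, so $\alpha\in(0,1)$. By construction $1-\lambda\leq\alpha(1-\delta)$ and $\beta-\tfrac{\beta\theta}{2}+\lambda C_1\leq\alpha\beta$, hence
$$
(1-\delta)\,e_{k+1}+\beta\,\eta_{k+1}\;\leq\;\alpha(1-\delta)\,e_k+\alpha\beta\,\eta_k+C_\delta\,o_k\;=\;\alpha\bigl[(1-\delta)e_k+\beta\eta_k\bigr]+C_\delta\,o_k,
$$
which is (\ref{eq:reduction}).

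The argument is essentially bookkeeping; the only point that requires care is the existence of the intermediate parameter $\lambda$, which is exactly where the quantitative hypothesis (\ref{eq:delta}) on $\delta$ is used, together with the verification that the resulting $\alpha$ is strictly less than $1$.
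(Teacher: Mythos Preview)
Your proof is correct and follows essentially the same approach as the paper: both add (\ref{eq:1}) and (\ref{eq:2}) to cancel $E_k$, then split $e_k$ and absorb part of it into the $\eta_k$ term via the upper bound (\ref{eq:3}). The only cosmetic difference is that the paper parametrizes the split directly by $\alpha$, writing $e_k=\alpha(1-\delta)e_k+[1-\alpha(1-\delta)]e_k$ and solving $[1-\alpha(1-\delta)]C_1+\beta(1-\tfrac{\theta}{2})=\alpha\beta$ for the explicit value $\alpha=\frac{C_1+\beta-\tfrac{1}{2}\theta\beta}{C_1(1-\delta)+\beta}$, whereas you introduce an auxiliary $\lambda$ and take a maximum; both choices yield $\alpha\in(0,1)$ under the same hypothesis (\ref{eq:delta}).
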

\begin{proof}
First (\ref{eq:1}) + (\ref{eq:2}) gives
$$
(1-\delta)e_{k+1} + \beta \eta _{k+1} \leq e_k + \beta
(1-\frac{1}{2}\theta)\eta _k + \frac{C_0}{\delta} \, o_k.
$$
Then we separate $e_k$ and use (\ref{eq:2}) to bound
\begin{align*}
  e_k &= \alpha (1-\delta)e_k + [1-\alpha (1-\delta)]e_k \\
  &\leq \alpha (1-\delta)e_k + [1-\alpha (1-\delta)](C_1\eta_k +
  C_0\,o_k).
\end{align*}
Therefore we obtain
\begin{align*}
  (1-\delta)e_{k+1} + \beta \eta _{k+1} \leq \alpha \left
    \{(1-\delta)e_k + \frac{[1-\alpha
      (1-\delta)]C_1}{\alpha}\eta_k \right \} + C_{\delta}\,o_k.
\end{align*}
Now we choose $\alpha$ such that 
$$
\frac{[1-\alpha (1-\delta)]C_1}{\alpha} = \beta, 
$$
i.e.
$$
\alpha =
\frac{C_1+(1-\frac{1}{2}\theta)\beta}{C_1(1-\delta)+\beta}=\frac{C_1+
  \beta -\frac{1}{2}\theta\beta}{C_1+\beta-C_1\delta}.
$$
By the requirement of $\delta$ (\ref{eq:delta}), we conclude $\alpha
\in (0,1)$.
\end{proof}

\begin{theorem}\label{th:reduction}
  Let $\bs \sigma_k$ be the solution obtained in the $k$-th loop in
  the algorithm {\bf \small AMFEM}, then for any $0<\delta < \min\{ \frac{\beta}{2C_1}\theta, 1\},$ there exist positive
  constants $C_{\delta}$ and $0<\gamma _{\delta}<1$ depending only on given data and the
  initial grid such that, 
$$
(1-\delta)\|\bs \sigma - \bs \sigma _k\|^2 + \beta \eta ^2(\bs \sigma
_k, \mathcal T_k) \leq C_{\delta} \gamma^k_{\delta},
$$
and thus the algorithm {\bf \small AMFEM} will terminate in finite
steps.
\end{theorem}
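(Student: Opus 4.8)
The plan is to reduce the whole statement to a scalar linear recursion. Using the short notation of the preceding lemma, set $a_k := (1-\delta)e_k + \beta\eta_k$; inequality \eqref{eq:reduction} then reads $a_{k+1}\le \alpha\, a_k + C_\delta\, o_k$ with a fixed contraction factor $\alpha=\alpha(\delta,\theta,\beta,C_1)\in(0,1)$. Hence it suffices to prove that the data oscillation decays geometrically, $o_k\le C_o\,\bar\gamma^{\,k}$ for some $\bar\gamma\in(0,1)$ and $C_o$ depending only on $f$, $\mcal T_0$, $\mu$, $\tilde\theta$ and the shape regularity of $\mcal T_0$; substituting this into the recursion and unrolling will give the result.

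First I would establish the oscillation decay. The key local fact is that newest vertex bisection of a triangle $T$, iterated a bounded number of times so that every descendant $T'$ has diameter at most $c_0H_T$ with $c_0<1$ depending only on shape regularity, reduces its contribution: $\sum_{T'\subset T} H_{T'}^2\|f-f_{T'}\|_{0,T'}^2\le q\,H_T^2\|f-f_T\|_{0,T}^2$ with $q=q(\text{shape regularity})<1$, since the diameter factors shrink by $c_0^2$ while passing to a finer partition never increases the local $L^2$ best approximation error $\|f-f_T\|$. Applying this on the triangles of $\Omega_{\mcal M_k}$ and using the enlargement \eqref{eq:tildetheta} of $\mcal M_k$ gives, whenever the inner test ${\rm osc}(f,\mcal T_k)>{\rm osc}(f,\mcal T_0)\mu^k$ is triggered, $o_{k+1}\le o_k-(1-q)\,{\rm osc}^2(f,\Omega_{\mcal M_k})\le\big(1-(1-q)\tilde\theta^{\,2}\big)\,o_k=:\rho\, o_k$ with $\rho<1$; and whenever the test is not triggered one has $o_{k+1}\le o_k$ (oscillation is monotone under refinement) together with $o_k\le{\rm osc}^2(f,\mcal T_0)\mu^{2k}$. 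A short bookkeeping argument — let $m$ be the largest index $<k$ at which the test was not triggered, iterate the factor $\rho$ from $m+1$ to $k$, and bound $\rho^{\,k-1-m}\mu^{2m}\le\bar\gamma^{\,k-1}$ with $\bar\gamma:=\max\{\rho,\mu^2\}<1$ (the degenerate case in which the test was triggered at every index just gives $o_k\le\rho^k o_0$) — yields $o_k\le C_o\,\bar\gamma^{\,k}$ with $C_o={\rm osc}^2(f,\mcal T_0)/\bar\gamma$.

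With this in hand, unrolling $a_{k+1}\le\alpha\, a_k+C_\delta C_o\,\bar\gamma^{\,k}$ gives $a_k\le \alpha^k a_0 + C_\delta C_o\sum_{j=0}^{k-1}\alpha^{k-1-j}\bar\gamma^{\,j}$, and estimating this geometric-type sum — the borderline case $\alpha=\bar\gamma$ costing only an extra factor $k$, absorbed by slightly enlarging the base — shows that for any chosen $\gamma_\delta\in\big(\max\{\alpha,\bar\gamma\},1\big)$ there is a constant $C_\delta$ with $a_k\le C_\delta\,\gamma_\delta^{\,k}$, all constants depending only on $\delta$, the fixed parameters $\theta,\tilde\theta,\mu$, the data $f$, and the initial mesh. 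Finally, since $\eta^2(\bs\sigma_k,\mcal E_k)=\eta_k\le a_k/\beta\to 0$, the loop guard $\eta\ge\varepsilon$ must fail after finitely many iterations, so {\bf\small AMFEM} terminates.

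The main obstacle is the oscillation-decay step, and in particular the point that the algorithm deliberately dispenses with the interior-node property: one must verify that plain newest vertex bisection of the marked triangles — iterated only as far as needed to restore conformity, but in any case a bounded number of times — already buys a uniform reduction factor $\rho<1$ in $\sum_T H_T^2\|f-f_T\|^2$, and one must carefully interleave the ``below threshold $\mu^k$'' steps with the ``reduced by $\rho$'' steps to extract a single geometric rate $\bar\gamma$. Everything after that — solving the scalar recursion and reading off termination — is routine.
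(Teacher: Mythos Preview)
Your proposal is correct and follows exactly the route the paper takes: the paper's own proof simply invokes \eqref{eq:reduction} and cites Theorem~4.7 of Morin--Nochetto--Siebert (2003), which is precisely the scalar recursion $a_{k+1}\le\alpha a_k+C_\delta o_k$ combined with a geometric decay of the data oscillation coming from the marking \eqref{eq:tildetheta} and the threshold test against $\mu^k$. Your write-up spells out the bookkeeping that the paper leaves to the citation, and your closing caveat about verifying a uniform diameter reduction under newest vertex bisection (needed for the factor $q<1$) is the one point that genuinely requires care---it is handled in the cited reference by bisecting each marked element a fixed number $b\ge 1$ of times, not merely by the bisections forced by conformity.
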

\begin{proof}
  The proof is identical to that of Theorem 4.7 in
  \cite{Morin.P;Nochetto.R;Siebert.K2003} using (\ref{eq:reduction}).
\end{proof}

\subsection{Optimality of AMFEM}

Let $\mathcal T_0$ be an initial quasi-uniform triangulation with $\#
\mathcal T_0>2$ and $\mathcal P_N$ be the set of all triangulations
$\mathcal T$ which is refined from $\mathcal T_0$ and $\#\mathcal
T\leq N$. For a given triangulation $\mathcal T$, the solution of the
mixed finite element approximation of Poisson equation will be
denoted by $\boldsymbol \sigma_{\mathcal T}$. We define
$$
\mathcal A^s=\{\boldsymbol \sigma \in \boldsymbol \Sigma :
\|\boldsymbol \sigma\|_{\mathcal A^s}<\infty,\; \hbox{ with
}\|\boldsymbol \sigma\|_{\mathcal A^s}=\sup _{N\geq \#\mathcal
  T_0}\big (N^{s}\inf _{\mathcal T\in \mathcal P_N}\|\boldsymbol
\sigma-\boldsymbol \sigma_{\mathcal T}\|\big)\}.
$$ 
An adaptive finite element method realizes optimal convergence rates
if whenever $\boldsymbol \sigma \in \mathcal A^s$, it produces
approximation $\boldsymbol \sigma_N$ with respect to triangulations
$\mathcal T_N$ elements such that $ \|\boldsymbol \sigma-\boldsymbol
\sigma_N\|\leq C (\# \mathcal T_N)^{-s}.$

For simplicity, we consider the following algorithm which separates the
reduction of data oscillation and error.

\begin{enumerate}
\item $[\mathcal T_H, f_H]=\hbox{\bf \small APPROX }(f, \mathcal
  T_0,\varepsilon/2)$
\smallskip
\item $[\boldsymbol \sigma _N, \mathcal
  T_N]=\hbox{\bf AMFEM}(\mathcal T_H,f_H,\varepsilon/2,\theta,0,1)$
\end{enumerate}

The advantage of separating data error and discretization error is that in the second step,
data oscillation is always zero since the input data $f_H$ is
piecewise polynomial in the initial grid $\mcal T_H$ for {\bf \small
  AMFEM}. In this case, we also list all ingredients needed for the
optimality of adaptive procedure.

\begin{enumerate}
\item Orthogonality: 
$$\|\boldsymbol \sigma -\boldsymbol \sigma
  _{k+1}\|^2 = \|\boldsymbol \sigma -\boldsymbol \sigma _k\|^2 -
  \|\boldsymbol \sigma _{k+1}-\boldsymbol \sigma _k\|^2$$

\item Discrete upper bound: 
$$ \|\boldsymbol \sigma _{k+1}-\boldsymbol
  \sigma _k\|^2\leq C_1\eta ^2(\boldsymbol \sigma _k, \mathcal
  F_k)\; \text{ and }\;\# \mathcal F_k\leq
  3(\# \mathcal T_{k+1} -\# \mathcal T_k).$$

\item Lower bound: $$C_2\eta ^2(\boldsymbol \sigma _k, \mathcal
  E_k) \leq \|\boldsymbol \sigma -\boldsymbol \sigma _k\|^2.$$
\end{enumerate}

\begin{theorem}\label{th:simpleopt}
  Let $[\boldsymbol \sigma _N, \mathcal T_N]=\hbox{\bf \small
    AMFEM}(\mathcal T_H, f_H, \varepsilon, \theta, 0, \mu)$, and
  $\tilde{\boldsymbol \sigma} = \mcal L^{-1}f_H$. If
  $\tilde{\boldsymbol \sigma} \in \mathcal A^s$ and $0<\theta<
  C_2/C_1$, then for any $\varepsilon>0$, {\bf \small AMFEM} will
  terminated in finite steps and
 \begin{equation}\label{solutionopt}
   \|\tilde{\boldsymbol \sigma} -\boldsymbol \sigma _N\|\leq \varepsilon, \quad  \hbox{and}\quad \# \mathcal T_N-\#\mathcal T_0\leq C\|\boldsymbol \sigma\|_{\mathcal A^s}^{1/s}\varepsilon ^{-1/s}.
\end{equation}
\end{theorem}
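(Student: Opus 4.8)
The plan is to follow Stevenson's simplified optimality analysis~\cite{Stevenson.R2007}, with the \emph{localized} discrete upper bound of Theorem~\ref{th:upperbound} and the exact Pythagoras identity \eqref{eq:orth1} (available here because the data $f_H$ is piecewise polynomial on $\mcal T_0=\mcal T_H$, so every $\osc$-term in the second stage vanishes) replacing the corresponding ingredients of the conforming case. Write $e_k=\|\tilde{\bs\sigma}-\bs\sigma_k\|^2$ and $\eta_k=\eta^2(\bs\sigma_k,\mcal E_k)$. Finite termination and the tolerance bound are almost immediate: Theorem~\ref{th:reduction} (with $o_k\equiv0$) gives geometric decay of $(1-\delta)e_k+\beta\eta_k$, so the loop stops after some finite number $N$ of iterations; at termination the estimator of the output is below $\varepsilon$, so by the upper bound \eqref{eq:upperbound} $\|\tilde{\bs\sigma}-\bs\sigma_N\|\le\sqrt{C_1}\,\eta(\bs\sigma_N,\mcal E_N)<\sqrt{C_1}\,\varepsilon$, which is the first estimate in \eqref{solutionopt} up to the fixed factor $\sqrt{C_1}$ (equivalently, run {\bf\small AMFEM} with tolerance $\varepsilon/\sqrt{C_1}$).

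The core of the argument is the quasi-optimality of D\"orfler marking. Fix an iteration $k<N$. Choose a constant $\beta_0\in(0,1)$ with $(1-\beta_0^2)C_2/C_1\ge\theta$, which is possible precisely because $\theta<C_2/C_1$, and set $\varepsilon_k=\beta_0\|\tilde{\bs\sigma}-\bs\sigma_k\|$. Since $\tilde{\bs\sigma}\in\mcal A^s$, there is $\mcal T'\in\mcal P_{N_k}$ with $N_k-\#\mcal T_0\lesssim\|\tilde{\bs\sigma}\|_{\mcal A^s}^{1/s}\varepsilon_k^{-1/s}$ and $\|\tilde{\bs\sigma}-\bs\sigma_{\mcal T'}\|\le\varepsilon_k$. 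Let $\mcal T_k^{\star}=\mcal T_k\oplus\mcal T'$ be the overlay; by the standard overlay estimate $\#\mcal T_k^{\star}-\#\mcal T_k\le\#\mcal T'-\#\mcal T_0\lesssim\|\tilde{\bs\sigma}\|_{\mcal A^s}^{1/s}\varepsilon_k^{-1/s}$, and the set $\mcal F_k=\{E\in\mcal E_k:E\notin\mcal E_{\mcal T_k^{\star}}\}$ of refined edges satisfies $\#\mcal F_k\le 3(\#\mcal T_k^{\star}-\#\mcal T_k)$. Because $\mcal T_k^{\star}$ refines $\mcal T'$, \eqref{eq:orth1} gives the quasi-monotonicity $\|\tilde{\bs\sigma}-\bs\sigma_{\mcal T_k^{\star}}\|\le\|\tilde{\bs\sigma}-\bs\sigma_{\mcal T'}\|\le\varepsilon_k=\beta_0\|\tilde{\bs\sigma}-\bs\sigma_k\|$, so a second use of \eqref{eq:orth1} yields $\|\bs\sigma_{\mcal T_k^{\star}}-\bs\sigma_k\|^2=e_k-\|\tilde{\bs\sigma}-\bs\sigma_{\mcal T_k^{\star}}\|^2\ge(1-\beta_0^2)\,e_k$. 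Combining this with the localized discrete upper bound of Theorem~\ref{th:upperbound}, $\|\bs\sigma_{\mcal T_k^{\star}}-\bs\sigma_k\|^2\le C_1\eta^2(\bs\sigma_k,\mcal F_k)$, and with the lower bound $e_k\ge C_2\eta_k$, I obtain $\eta^2(\bs\sigma_k,\mcal F_k)\ge\tfrac{(1-\beta_0^2)C_2}{C_1}\eta_k\ge\theta\,\eta_k$; hence $\mcal F_k$ already satisfies the marking requirement \eqref{eq:theta}, and minimality of $\mcal M_k$ forces $\#\mcal M_k\le\#\mcal F_k\lesssim\|\tilde{\bs\sigma}\|_{\mcal A^s}^{1/s}\,e_k^{-1/(2s)}$.

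It remains to sum. Property (2) of newest vertex bisection gives $\#\mcal T_N-\#\mcal T_0\lesssim\sum_{k=0}^{N-1}\#\mcal M_k\lesssim\|\tilde{\bs\sigma}\|_{\mcal A^s}^{1/s}\sum_{k=0}^{N-1}e_k^{-1/(2s)}$. With $a_k=(1-\delta)e_k+\beta\eta_k$, the reduction \eqref{eq:reduction} (again with $o_k\equiv0$) reads $a_{k+1}\le\alpha a_k$ with $\alpha\in(0,1)$, while the lower bound makes $a_k$ comparable to $e_k$; therefore $e_k\gtrsim\alpha^{-(N-1-k)}e_{N-1}$, and the resulting geometric series gives $\sum_{k=0}^{N-1}e_k^{-1/(2s)}\lesssim e_{N-1}^{-1/(2s)}$. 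Since the stopping test failed at the penultimate iterate, $\eta(\bs\sigma_{N-1},\mcal E_{N-1})\ge\varepsilon$, so $e_{N-1}\ge C_2\varepsilon^2$ by the lower bound, and we conclude $\#\mcal T_N-\#\mcal T_0\lesssim\|\tilde{\bs\sigma}\|_{\mcal A^s}^{1/s}\varepsilon^{-1/s}$, the second estimate in \eqref{solutionopt}.

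I expect the second paragraph to be the only real obstacle. The new content is the quasi-optimality of D\"orfler marking, and it rests on the fact that Theorem~\ref{th:upperbound} controls $\|\bs\sigma_{\mcal T_k^{\star}}-\bs\sigma_k\|$ by the estimator restricted to the edges that are \emph{actually refined} (not the whole estimator), used together with the exact identity \eqref{eq:orth1}; without the localization one could not conclude that $\mcal F_k$ meets the marking threshold. The remaining points --- the bookkeeping between marked edges, refined edges and refined triangles, the overlay cardinality bound, and the precise stopping index --- are routine but must be tracked carefully.
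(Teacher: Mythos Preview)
Your proposal is correct and is precisely the approach the paper takes: the paper's proof consists of a single sentence referring to Stevenson's Theorem~5.3 in \cite{Stevenson.R2007}, invoked with the three ingredients listed just before the statement (the exact orthogonality \eqref{eq:orth1} available because $\osc(f_H,\mcal T_k)=0$, the localized discrete upper bound of Theorem~\ref{th:upperbound}, and the lower bound \eqref{eq:lowerbound}). You have simply written out Stevenson's argument in full, including the overlay construction, the verification that the refined-edge set already satisfies D\"orfler marking (which is exactly where the restriction $\theta<C_2/C_1$ enters), and the geometric-series summation; nothing in your write-up deviates from that route.
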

\begin{proof}
  It is identical to the proof of Theorem 5.3 in
  \cite{Stevenson.R2007} using three ingredients listed above.
\end{proof}

\begin{theorem}
  For any $f\in L^2(\Omega)$, a shape regular triangulation $\mathcal
  T_0$ and $\varepsilon >0$. Let $\boldsymbol \sigma=\mcal L^{-1}f$
  and $[\boldsymbol \sigma _N, \mathcal T_N]= \hbox{\bf \small AMFEM
  }(\mathcal T_H, f_H, \varepsilon/2, 0, 1)$ where $[\mathcal T_H,
  f_H]=\hbox{\bf \small APPROX }(f, \mathcal T_0,\varepsilon/2)$. If
  $\boldsymbol \sigma \in \mathcal A^s$ and $f\in \mathcal A^{s}_o$,
  then
$$
\|\boldsymbol \sigma - \boldsymbol \sigma _N\|\leq C\big
(\|\boldsymbol \sigma \|_{\mathcal A^s}+\|f\|_{\mathcal A_o^s}\big
)(\#\mathcal T_N-\#\mathcal T_0)^{-s}.
$$ 
\end{theorem}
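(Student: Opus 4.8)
The plan is to obtain the bound by combining the cost estimate for the data-approximation routine with the optimality of \textbf{AMFEM} applied to the piecewise-polynomial data $f_H$, and then to glue the two steps together with the stability estimate of Theorem~\ref{th:stabilityosc}. Write $\tilde{\bs\sigma}=\mcal L^{-1}f_H$. By Theorem~\ref{th:BDD}, the call $[\mcal T_H,f_H]=\textbf{APPROX}(f,\mcal T_0,\varepsilon/2)$ produces a mesh with $\osc(f,\mcal T_H)\le\varepsilon/2$ and $\#\mcal T_H-\#\mcal T_0\lesssim\|f\|_{\mcal A^s_o}^{1/s}\varepsilon^{-1/s}$. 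Since $f_H$ is piecewise polynomial on $\mcal T_H$ we have $\osc(f_H,\mcal T)=0$ for every refinement $\mcal T$ of $\mcal T_H$, so the choice $\tilde\theta=0$ is harmless; granting for the moment that $\tilde{\bs\sigma}\in\mcal A^s$, Theorem~\ref{th:simpleopt} (applied with initial mesh $\mcal T_H$ and the D\"orfler parameter in the range required there) gives $\|\tilde{\bs\sigma}-\bs\sigma_N\|\le\varepsilon/2$ together with $\#\mcal T_N-\#\mcal T_H\lesssim\|\tilde{\bs\sigma}\|_{\mcal A^s}^{1/s}\varepsilon^{-1/s}$. On the other hand $\|\bs\sigma-\tilde{\bs\sigma}\|\le\sqrt{C_0}\,\osc(f,\mcal T_H)\le\sqrt{C_0}\,\varepsilon/2$ by Theorem~\ref{th:stabilityosc}, so $\|\bs\sigma-\bs\sigma_N\|\le C\varepsilon$. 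Adding the two cardinality bounds and using elementary inequalities, $\#\mcal T_N-\#\mcal T_0\lesssim(\|\tilde{\bs\sigma}\|_{\mcal A^s}+\|f\|_{\mcal A^s_o})^{1/s}\varepsilon^{-1/s}$, which rearranges to $\varepsilon\lesssim(\|\tilde{\bs\sigma}\|_{\mcal A^s}+\|f\|_{\mcal A^s_o})(\#\mcal T_N-\#\mcal T_0)^{-s}$; combining with $\|\bs\sigma-\bs\sigma_N\|\le C\varepsilon$ gives the assertion, \emph{provided} one also knows
$$
\|\tilde{\bs\sigma}\|_{\mcal A^s}\le C\big(\|\bs\sigma\|_{\mcal A^s}+\|f\|_{\mcal A^s_o}\big)
$$
uniformly over the $f_H$ that \textbf{APPROX} may return. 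Establishing this inequality is the real content of the proof.

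To prove it I would fix $N\ge\#\mcal T_0$ and build a competitor mesh $\mcal T\in\mcal P_N$ by overlaying near-best meshes: take $\mcal T_1\in\mcal P_{\lceil N/2\rceil}$ almost realizing $\inf_{\mcal S\in\mcal P_{\lceil N/2\rceil}}\|\bs\sigma-\bs\sigma_{\mcal S}\|$ and $\mcal T_2\in\mcal P_{\lceil N/2\rceil}$ almost realizing $\inf_{\mcal S\in\mcal P_{\lceil N/2\rceil}}\osc(f,\mcal S)$, and set $\mcal T=\mcal T_1\oplus\mcal T_2$, which lies in $\mcal P_N$ by the standard overlay count $\#(\mcal T_1\oplus\mcal T_2)\le\#\mcal T_1+\#\mcal T_2-\#\mcal T_0$ and refines both $\mcal T_1$ and $\mcal T_2$. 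Writing $\bs\rho:=\bs\sigma-\tilde{\bs\sigma}=\mcal L^{-1}(f-f_H)$ and $\bs\rho_{\mcal T}:=\mcal L^{-1}_{\mcal T}\big((f-f_H)_{\mcal T}\big)$ (their $\bs\Sigma$-components), linearity of $\mcal L^{-1}$ and of $\mcal L^{-1}_{\mcal T}$ gives the identity
$$
\tilde{\bs\sigma}-\tilde{\bs\sigma}_{\mcal T}=(\bs\sigma-\bs\sigma_{\mcal T})-(\bs\rho-\bs\rho_{\mcal T}),
$$
whose point is that the fixed $O(\varepsilon)$ discrepancy between $\bs\sigma$ and $\tilde{\bs\sigma}$ cancels — a naive triangle inequality through $\bs\sigma$ would leave a floor that does not decay with $N$. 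Hence $\|\tilde{\bs\sigma}-\tilde{\bs\sigma}_{\mcal T}\|\le\|\bs\sigma-\bs\sigma_{\mcal T}\|+\|\bs\rho-\bs\rho_{\mcal T}\|$, and it remains to bound each term by $C(\|\bs\sigma\|_{\mcal A^s}+\|f\|_{\mcal A^s_o})N^{-s}$.

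For the first term I would use the quasi-orthogonality \eqref{eq:quasiorth2} on a nested pair, which bounds $\|\bs\sigma-\bs\sigma_{\mcal T}\|^2$ by $\|\bs\sigma-\bs\sigma_{\mcal S}\|^2$ plus $C\osc^2(f,\mcal S)$ for any intermediate $\mcal S$ with $\mcal T\supseteq\mcal S$; together with the monotonicity $\osc(f,\mcal T')\le\osc(f,\mcal T)$ under refinement (Remark~\ref{remark}), this is what lets the oscillation contributions inherit the rate $N^{-s}$. For the second term, $\bs\rho-\bs\rho_{\mcal T}$ is the mixed finite element error for the right-hand side $g=f-f_H$ on $\mcal T$, and since $\osc(g,\mcal T)\le\osc(f,\mcal T)\le\osc(f,\mcal T_2)\lesssim\|f\|_{\mcal A^s_o}N^{-s}$, the stability and upper-bound results of Sections~3 and 4 (Theorems~\ref{th:stabilityosc}, \ref{th:discretestability}, \ref{th:upperbound}) give $\|\bs\rho-\bs\rho_{\mcal T}\|\lesssim\|f\|_{\mcal A^s_o}N^{-s}$. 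Taking the infimum over $\mcal T\in\mcal P_N$ and then the supremum over $N$ yields the displayed bound on $\|\tilde{\bs\sigma}\|_{\mcal A^s}$, and with it the theorem.

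The step I expect to be the main obstacle is precisely the bookkeeping glossed over above: every use of quasi-orthogonality attaches an oscillation term to the \emph{coarser} mesh of a nested pair, so one must apportion the element budget $N$ among a few fixed fractions and choose the overlaid meshes so that \emph{every} coarse mesh that arises already resolves the data oscillation at rate $N^{-s}$ (and, for the same reason, also resolves $\bs\sigma$), while still staying in $\mcal P_N$. Keeping these competing requirements simultaneously satisfied is the delicate part; once it is set up, the individual estimates are routine applications of the results established above, and the final arithmetic assembling the rate from the two cardinality bounds is as in Stevenson~\cite{Stevenson.R2007}.
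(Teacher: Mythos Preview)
Your overall architecture is exactly the paper's: split the error through $\tilde{\bs\sigma}=\mcal L^{-1}f_H$, use Theorem~\ref{th:BDD} and Theorem~\ref{th:stabilityosc} for the data step, Theorem~\ref{th:simpleopt} for the AMFEM step, and add the two cardinality bounds. The paper dispatches the inequality $\|\tilde{\bs\sigma}\|_{\mcal A^s}\le\|\bs\sigma\|_{\mcal A^s}+\|f\|_{\mcal A^s_o}$ in one line (``easy to show, by the definition of $\mcal A^s$''), whereas you spend most of your proposal on it; your instinct that this is where the real work hides is reasonable, and the overlay-plus-linearity decomposition $\tilde{\bs\sigma}-\tilde{\bs\sigma}_{\mcal T}=(\bs\sigma-\bs\sigma_{\mcal T})-(\bs\rho-\bs\rho_{\mcal T})$ is the right way to avoid the ``floor'' you identify.

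There is, however, a gap in how you handle the second piece $\|\bs\rho-\bs\rho_{\mcal T}\|$. You assert that Theorems~\ref{th:stabilityosc}, \ref{th:discretestability}, \ref{th:upperbound} together with the bound $\osc(g,\mcal T)\le\osc(f,\mcal T)$ yield $\|\bs\rho-\bs\rho_{\mcal T}\|\lesssim\|f\|_{\mcal A^s_o}N^{-s}$. Two issues: first, $\osc(f-f_H,\mcal T)\le\osc(f,\mcal T)$ is only immediate when $\mcal T$ refines $\mcal T_H$ (so that $Q_{\mcal T}f_H=f_H$), which your overlay $\mcal T_1\oplus\mcal T_2$ need not do. Second, and more seriously, none of those theorems bounds a full mixed-FEM error by oscillation alone. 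Theorem~\ref{th:stabilityosc} controls $\|\bs\rho-\mcal L^{-1}(g_{\mcal T})\|$, not $\|\bs\rho-\mcal L^{-1}_{\mcal T}(g_{\mcal T})\|$; Theorem~\ref{th:discretestability} compares two discrete solutions on the \emph{same} fine mesh; and the upper bound of Theorem~\ref{th:Alonso} leaves the estimator term $\eta^2(\bs\rho_{\mcal T},\mcal E_{\mcal T})$, which you have not controlled. So as written the $N^{-s}$ decay of $\|\bs\rho-\bs\rho_{\mcal T}\|$ is unproved.

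A cleaner route that repairs both issues is to include $\mcal T_H$ in the overlay for $N$ large and argue in two regimes. For $N\gtrsim\#\mcal T_H$ take $\mcal T=\mcal T_1\oplus\mcal T_2\oplus\mcal T_H$; this still has $O(N)$ elements and now refines $\mcal T_H$, so $\osc(f_H,\cdot)$ vanishes on every intermediate mesh and the exact orthogonality \eqref{eq:orth1} replaces quasi-orthogonality wherever data $f_H$ appear, eliminating the extra estimator term. For $N\lesssim\#\mcal T_H$ one can bound $N^s\inf_{\mcal T}\|\tilde{\bs\sigma}-\tilde{\bs\sigma}_{\mcal T}\|$ directly using $\#\mcal T_H-\#\mcal T_0\lesssim\|f\|_{\mcal A^s_o}^{1/s}\varepsilon^{-1/s}$ and the continuous stability $\|\bs\rho\|\lesssim\osc(f,\mcal T_H)\lesssim\varepsilon$. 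With this modification your sketch goes through and matches the paper's conclusion, though with considerably more detail than the paper provides.
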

\begin{proof}
  Let $\tilde {\boldsymbol \sigma} = \mcal L^{-1}f_H$. By Theorem
  \ref{th:stabilityosc} and \ref{th:BDD}, we have
\begin{equation}\label{eq:11}
  \|\boldsymbol \sigma -\tilde {\boldsymbol \sigma}\|\leq \varepsilon /2,\quad \hbox{and}\quad \#\mathcal T_H-\#\mathcal T_0\leq C \|f\|_{\mathcal A^{s}_o}^{1/s}\varepsilon ^{-1/s}.
\end{equation}
It is easy to show, by the definition of $\mathcal A^s$, if
$\boldsymbol \sigma \in \mathcal A^s$, then $\tilde {\boldsymbol
  \sigma} \in \mathcal A^s$ and
$$
\|\tilde {\boldsymbol \sigma} \|_{\mathcal A^s}\leq \|\boldsymbol
\sigma \|_{\mathcal A^s}+\|f\|_{\mathcal A^s_o}
$$
We then apply Theorem \ref{th:simpleopt} to $\tilde {\boldsymbol
  \sigma}$ to obtain
\begin{equation}\label{eq:12}
  \|\tilde {\boldsymbol \sigma} - \boldsymbol \sigma _N\|\leq \varepsilon/2 \quad \hbox{ and }\quad
  \# \mathcal T_N-\#\mathcal T_H\leq C\|\tilde {\boldsymbol \sigma}\|_{\mathcal A^s}^{1/s}\varepsilon ^{-1/s}.
\end{equation}
Combining (\ref{eq:11}) and (\ref{eq:12}) we get
$$
\|\boldsymbol \sigma -\boldsymbol \sigma _N\|\leq \|\boldsymbol
\sigma-\tilde {\boldsymbol \sigma}\|+\|\tilde {\boldsymbol
  \sigma}-\boldsymbol \sigma_{N}\|\leq \varepsilon
$$
and
$$
\varepsilon \leq C (\# \mathcal T_N-\#\mathcal T_0)^{-s}\big
(\|\boldsymbol \sigma\|_{\mathcal A^s}+\|f\|_{\mathcal A^{s}_o}\big ).
$$
The desired result then follows.
\end{proof}

\section{Conclusion and future work}
In this paper, we have designed and analyzed convergent adaptive mixed finite element methods with optimal complexity for arbitrary order Raviart-Thomas and Brezzi-Douglas-Marini elements. Although the results are presented in two dimensions, most of them are dimensional independent. For example, the discrete stability result, Theorem \ref{th:stabilityosc}, holds in arbitrary dimensions without any modification of the proof. 

The proof for the upper bound of the error estimator (Theorem \ref{th:Alonso} and \ref{th:upperbound}), however, cannot be generalized to three dimensions in a straightforward way. In the proof, we use a special fact that in two dimensions, $H(\curl)$ is as smooth as $H^1$ since in two dimensions curl operator is just a rotation of gradient operator. To overcome this difficulty, we need to use a regular decomposition instead of Helmholtz decomposition. Note that discrete regular decomposition for corresponding finite element spaces is developed recently by Hiptmair and Xu \cite{Hiptmair.R;Xu.J2006}. We could use these techniques to prove the convergence and optimality of adaptive mixed finite element methods in three and higher dimensions.

\subsection*{Acknowledgement} The authors would like to thank
Dr. Guzman for the discussion on the simplification of the proof of the
discrete stability result and Prof. Nochetto for the simplification of
convergence analysis without using discrete lower bound.

\bibliographystyle{abbrv}
\bibliography{../bib/books,../bib/papers,../bib/mjh,../bib/library,../bib/ref-gn,../bib/coupling,../bib/pnp}

\bibliographystyle{abbrv}
\begin{thebibliography}{10}

\bibitem{Alonso.A1996}
A.~Alonso.
\newblock Error estimators for a mixed method.
\newblock {\em Numerische Mathematik}, 74(4):385--395, 1996.

\bibitem{Arnold.D2004}
D.~N. Arnold.
\newblock Differential complexes and numerical stability.
\newblock {\em Plenary address delivered at ICM 2002 International Congress of
  Mathematicians}, 2004.

\bibitem{Arnold.D;Brezzi.F1985}
D.~N. Arnold and F.~Brezzi.
\newblock Mixed and nonconforming finite element methods: Implementation,
  postporcessing and error estimates.
\newblock {\em RAIRO Model Math. Anal. Numer.}, 19:7--32, 1985.

\bibitem{Arnold.D;Falk.R;Winther.R2000}
D.~N. Arnold, R.~S. Falk, and R.~Winther.
\newblock Multigrid in ${H}(div)$ and ${H}(curl)$.
\newblock {\em Numerische Mathematik}, 85:197--218, 2000.

\bibitem{Arnold.D;Falk.R;Winther.R2005}
D.~N. Arnold, R.~S. Falk, and R.~Winther.
\newblock Differential complexes and stability of finite element methods. {I}.
  the de {R}ham complex.
\newblock {\em preprint}, 2005.

\bibitem{Arnold.D;Falk.R;Winther.R2006}
D.~N. Arnold, R.~S. Falk, and R.~Winther.
\newblock Finite element exterior calculus, homological techniques, and
  applications.
\newblock {\em Acta Numerica}, pages 1--155, 2006.

\bibitem{Arnold.D;Scott.L;Vogelius.M1988}
D.~N. Arnold, L.~R. Scott, and M.~Vogelius.
\newblock Regular inversion of the divergence operator with {D}irichlet
  boundary conditions on a polygon.
\newblock {\em Ann. Scuola Norm. Sup. Pisa Cl. Sci. (4)}, 15(2):169--192
  (1989), 1988.

\bibitem{Babuska.I;Vogelius.M1984}
I.~Babu{\v s}ka and M.~Vogelius.
\newblock Feeback and adaptive finite element solution of one-dimensional
  boundary value problems.
\newblock {\em Numerische Mathematik}, 44:75--102, 1984.

\bibitem{Bacuta.C;Bramble.J;Xu.J2002}
C.~Bacuta, J.~H. Bramble, and J.~Xu.
\newblock Regularity estimates for elliptic boundary value problems in besov
  spaes.
\newblock {\em Mathematics of Computation}, 72(244):1577--1595, 2002.

\bibitem{Bacuta.C;Bramble.J;Xu.J2003}
C.~Bacuta, J.~H. Bramble, and J.~Xu.
\newblock Regularity estimates for elliptic boundary value problems with smooth
  data on polygonal domains.
\newblock {\em Numerische Mathematik}, 11(2):75--94, 2003.

\bibitem{Bank.R;Sherman.A;Weiser.A1983}
R.~E. Bank, A.~H. Sherman, and A.~Weiser.
\newblock Refinement algorithms and data structures for regular local mesh
  refinement.
\newblock In {\em Scientific Computing}, pages 3--17. IMACS/North-Holland
  Publishing Company, Amsterdam, 1983.

\bibitem{Binev.P;Dahmen.W;DeVore.R2004}
P.~Binev, W.~Dahmen, and R.~DeVore.
\newblock Adaptive finite element methods with convergence rates.
\newblock {\em Numerische Mathematik}, 97(2):219--268, 2004.

\bibitem{Binev.P;Dahmen.W;DeVore.R;Petrushev.P2002}
P.~Binev, W.~Dahmen, R.~DeVore, and P.~Petrushev.
\newblock Approximation classes for adaptive methods.
\newblock {\em Serdica Math. J}, 28:391--416, 2002.

\bibitem{Binev.P;DeVore.R2004}
P.~Binev and R.~DeVore.
\newblock Fast computation in adaptive tree approximation.
\newblock {\em Numerische Mathematik}, 97:193--217, 2004.

\bibitem{Bochev.P;Gunzburger.M2005}
P.~Bochev and M.~Gunzburger.
\newblock On least-sequares finite element methods for the poisson equation and
  their connection to the {Dirichlet} and {Kelvin} principles.
\newblock {\em SIAM Journal on Numerical Analysis}, 43(1):340--362, 2005.

\bibitem{Bossavit.A1988}
A.~Bossavit.
\newblock Whitney forms: a class of finite elements for three-dimensional
  computations in electromagnetism.
\newblock {\em Science, Measurement and Technology, IEE Proceedings},
  135(8):493--500, Nov 1988.

\bibitem{Braess.D;Verfurth.R1990}
D.~Braess and R.~Verf{\"u}rth.
\newblock Multigrid methods for nonconforming finite element methods.
\newblock {\em SIAM Journal on Numerical Analysis}, 27:979--986, 1990.

\bibitem{Bramble.J;Pasciak.J;Xu.J1988}
J.~H. Bramble, J.~E. Pasciak, and J.~Xu.
\newblock The analysis of multigrid algorithms for nonsymmetric and indefinite
  elliptic problems.
\newblock {\em Mathematics of Computation}, 51:389--414, 1988.

\bibitem{Brandts.J1994a}
J.~H. Brandts.
\newblock {\em Superconvergence phenomena in finite element methods}.
\newblock PhD thesis, Utrecht University, 1994.

\bibitem{Brenner.S1992}
S.~C. Brenner.
\newblock A multigrid algorithm for the lowest-order {R}aviart-{T}homas mixed
  triangular finite element method.
\newblock {\em SIAM Journal on Numerical Analysis}, 29:647--678, 1992.

\bibitem{Brenner.S1996}
S.~C. Brenner.
\newblock Two-level additive {S}chwarz preconditioners for nonconforming finite
  element methods.
\newblock {\em Mathematics of Computation}, 65:897--921, 1996.

\bibitem{Brenner.S;Scott.L2002}
S.~C. Brenner and L.~R. Scott.
\newblock {\em The mathematical theory of finite element methods}, volume~15 of
  {\em Texts in Applied Mathematics}.
\newblock Springer-Verlag, New York, second edition, 2002.

\bibitem{Brezzi.F;Douglas.J;Marini.L1985}
F.~Brezzi, J.~Douglas, and L.~D. Marini.
\newblock Two families of mixed finite elements for second order elliptic
  problems.
\newblock {\em Numerische Mathematik}, 47(2):217--235, 1985.

\bibitem{Brezzi.F;Fortin.M1991}
F.~Brezzi and M.~Fortin.
\newblock {\em Mixed and hybrid finite element methods}.
\newblock Springer-Verlag, 1991.

\bibitem{Carstensen.C1997}
C.~Carstensen.
\newblock A posteriori error estimate for the mixed finite element method.
\newblock {\em Mathematics of Computation}, 66:465--476, 1997.

\bibitem{Carstensen.C;Hoppe.R2006}
C.~Carstensen and R.~H.~W. Hoppe.
\newblock Error reduction and convergence for an adaptive mixed finite element
  method.
\newblock {\em Math. Comp.}, 75(255):1033--1042 (electronic), 2006.

\bibitem{Cascon.J;Kreuzer.C;Nochetto.R;Siebert.K2007}
J.~M. Casc\'on, C.~Kreuzer, R.~H. Nochetto, and K.~G. Siebert.
\newblock Quasi-optimal convergence rate for an adaptive finite element method.
\newblock {\em Preprint 9, University of Augsburg}, 2007.

\bibitem{Chen.L2006a}
L.~Chen.
\newblock Short implementation of bisection in {MATLAB}.
\newblock {\em report}, 2006.

\bibitem{Chen.L;Xu.J2007}
L.~Chen and J.~Xu.
\newblock Topics on adaptive finite element methods.
\newblock In T.~Tang and J.~Xu, editors, {\em Adaptive Computations: Theory and
  Algorithms}, pages 1--31. Science Press, Beijing, 2007.

\bibitem{Ciarlet.P1978}
P.~G. Ciarlet.
\newblock {\em The Finite Element Method for Elliptic Problems}, volume~4 of
  {\em Studies in Mathematics and its Applications}.
\newblock North-Holland Publishing Co., Amsterdam-New York-Oxford, 1978.

\bibitem{Cockburn.B;Gopalakrishnan.J2004}
B.~Cockburn and J.~Gopalakrishnan.
\newblock A characterization of hybridized mixed methods for second order
  elliptic problems.
\newblock {\em SIAM Journal on Numerical Analysis}, 42(1):283--301, 2004.

\bibitem{Cockburn.B;Gopalakrishnan.J2005a}
B.~Cockburn and J.~Gopalakrishnan.
\newblock Error analysis of variable degree mixed methods for elliptic problems
  via hybridization.
\newblock {\em Mathematics of Computation}, 74(252):1653--1677, 2005.

\bibitem{Cockburn.B;Gopalakrishnan.J2005}
B.~Cockburn and J.~Gopalakrishnan.
\newblock New hybridization techniques.
\newblock {\em GAMM-Mitt.}, 28(2):154--182, 2005.

\bibitem{Dahlke.S1999}
S.~Dahlke.
\newblock Besov regularity for elliptic boundary value problems on polygonal
  domains.
\newblock {\em Appl. Math. Lett.}, 12:31--36, 1999.

\bibitem{Dahlke.S;DeVore.R1997}
S.~Dahlke and R.~A. DeVore.
\newblock Besov regularity for elliptic boundary value problems.
\newblock {\em Comm. Partial Differential Equations}, 22(1\&2):1--16, 1997.

\bibitem{Dorfler.W1996}
W.~D\"orfler.
\newblock A convergent adaptive algorithm for {Poisson}'s equation.
\newblock {\em SIAM Journal on Numerical Analysis}, 33:1106--1124, 1996.

\bibitem{Duran.R;Muschietti.M2001}
R.~G. Dur{\'a}n and M.~A. Muschietti.
\newblock An explicit right inverse of the divergence operator which is
  continuous in weighted norms.
\newblock {\em Studia Math.}, 148(3):207--219, 2001.

\bibitem{Fix.G;Gunzburger.M;Nicolaides.R1981}
G.~J. Fix, M.~D. Gunzburger, and R.~A. Nicolaides.
\newblock On mixed finite element methods for first order elliptic systems.
\newblock {\em Numerische Mathematik}, 37(1):29--48, 1981.

\bibitem{Gatica.G;Maischak.M2004}
G.~N. Gatica and M.~Maischak.
\newblock A posteriori error estimates for the mixed finite element method with
  lagrange multipliers.
\newblock {\em Numer. Methods Partial Differential Equations}, 21(3):421 --
  450, 2004.

\bibitem{Gopalakrishnan.J2003}
J.~Gopalakrishnan.
\newblock A {Schwarz} preconditioner for a hybridized mixed method.
\newblock {\em Computational Methods In Applied Mathematics}, 3(1):116---134,
  2003.

\bibitem{Hiptmair.R1999a}
R.~Hiptmair.
\newblock Canonical construction of finite elements.
\newblock {\em Mathematics of Computation}, 68:1325--1346, 1999.

\bibitem{Hiptmair.R;Xu.J2006}
R.~Hiptmair and J.~Xu.
\newblock Nodal auxiliary space preconditioning in {H(curl)} and {H(div)}
  spaces.
\newblock Research report no. 2006-09, ETH, Zurich, Switzerland, 2006.

\bibitem{Hoppe.R;Wohlmuth.B1997}
R.~H.~W. Hoppe and B.~Wohlmuth.
\newblock Adaptive multilevel techniques for mixed finite element
  discretizations of elliptic boundary value problems.
\newblock {\em SIAM Journal on Numerical Analysis}, 34(4):1658--1681, aug 1997.

\bibitem{Larson.M;Maqvist.A2005}
M.~G. Larson and A.~Maqvist.
\newblock A posteriori error estimates for mixed finite element approximations
  of elliptic problems.
\newblock {\em Preprint}, 2005.

\bibitem{Lovadina.C;Stenberg.R2006}
C.~Lovadina and R.~Stenberg.
\newblock Energy norm a posteriori error estimates for mixed finite element
  methods.
\newblock {\em Mathemathics of Computation}, Primary 65N30, 2006.

\bibitem{Mitchell.W1989}
W.~F. Mitchell.
\newblock A comparison of adaptive refinement techniques for elliptic problems.
\newblock {\em ACM Transactions on Mathematical Software (TOMS) archive},
  15(4):326 -- 347, 1989.

\bibitem{Mitchell.W1992}
W.~F. Mitchell.
\newblock Optimal multilevel iterative methods for adaptive grids.
\newblock {\em SIAM Journal on Scientific and Statistical Computing},
  13:146--167, 1992.

\bibitem{Morin.P;Nochetto.R;Siebert.K2000}
P.~Morin, R.~Nochetto, and K.~Siebert.
\newblock Data oscillation and convergence of adaptive {FEM}.
\newblock {\em SIAM Journal on Numerical Analysis}, 38(2):466--488, 2000.

\bibitem{Morin.P;Nochetto.R;Siebert.K2002}
P.~Morin, R.~H. Nochetto, and K.~G. Siebert.
\newblock Convergence of adaptive finite element methods.
\newblock {\em SIAM Review}, 44(4):631--658, 2002.

\bibitem{Morin.P;Nochetto.R;Siebert.K2003}
P.~Morin, R.~H. Nochetto, and K.~G. Siebert.
\newblock Local problems on stars: A posteriori error estimators, convergence,
  and performance.
\newblock {\em Mathematics of Computation}, 72:1067--1097, 2003.

\bibitem{Morin.P;Siebert.K;Veeser.A2007a}
P.~Morin, K.~G. Siebert, and A.~Veeser.
\newblock A basic convergence result for conforming adaptive finite elements.
\newblock {\em Preprint, University of Augsburg}, 2007.

\bibitem{Oswald.P1997}
P.~Oswald.
\newblock Intergrid transfer operators and multilevel preconditioners for
  nonconforming discretizations.
\newblock {\em Applied Numerical Mathematics}, 23(1):139--158, 1997.

\bibitem{Raviart.P;Thomas.J1977}
P.~A. Raviart and J.~Thomas.
\newblock A mixed finite element method fo 2-nd order elliptic problems.
\newblock In I.~Galligani and E.~Magenes, editors, {\em Mathematical aspects of
  the Finite Elements Method}, Lectures Notes in Math. 606, pages 292--315.
  Springer, Berlin, 1977.

\bibitem{Rivara.M1984a}
M.~C. Rivara.
\newblock Design and data structure for fully adaptive, multigrid finite
  element software.
\newblock {\em ACM Trans. Math. Soft.}, 10:242--264, 1984.

\bibitem{Rivara.M1984}
M.~C. Rivara.
\newblock Mesh refinement processes based on the generalized bisection of
  simplices.
\newblock {\em SIAM Journal on Numerical Analysis}, 21:604--613, 1984.

\bibitem{Rusten.T;Vassilevski.P;Winther.R1996}
T.~Rusten, P.~S. Vassilevski, and R.~Winther.
\newblock Interior penalty preconditioners for mixed finite element
  approximations of elliptic problems.
\newblock {\em Mathemathics of Computation}, 65:447--466, 1996.

\bibitem{Scott.R;Zhang.S1990}
R.~Scott and S.~Zhang.
\newblock Finite element interpolation of nonsmooth functions satisfying
  boundary conditions.
\newblock {\em Mathematics of Computation}, 54:483--493, 1990.

\bibitem{Sewell.E1972}
E.~G. Sewell.
\newblock Automatic generation of triangulations for piecewise polynomial
  approximation.
\newblock In {\em Ph. D. dissertation}. Purdue Univ., West Lafayette, Ind.,
  1972.

\bibitem{Stevenson.R2007}
R.~Stevenson.
\newblock Optimality of a standard adaptive finite element method.
\newblock {\em Found. Comput. Math.}, 7(2):245--269, 2007.

\bibitem{Stevenson.R2008}
R.~Stevenson.
\newblock The completion of locally refined simplicial partitions created by
  bisection.
\newblock {\em Mathemathics of Computation}, 77:227--241, 2008.

\bibitem{Verfurth.R1996}
R.~Verf{\"u}rth.
\newblock {\em A review of a posteriori error estimation and adaptive mesh
  refinement tecniques}.
\newblock B. G. Teubner, 1996.

\bibitem{Wohlmuth.B;Hoppe.R1999}
B.~I. Wohlmuth and R.~H.~W. Hoppe.
\newblock A comparison of a posteriori error estimators for mixed finite
  element discretizations by raviart-thomas elements.
\newblock {\em Mathematics of Computation}, 82:253--279, 1999.

\end{thebibliography}

\clearpage

\vspace*{0.5cm}

\end{document}